\newcommand{\R}{{\mathbb{R}}}
\newcommand{\Z}{{\mathbb{Z}}}
\def\vecx{{\text{\boldmath$x$}}}
\def\vecu{{\text{\boldmath$u$}}}
\def\vecv{{\text{\boldmath$v$}}}
\def\vecw{{\text{\boldmath$w$}}}
\def\vecm{{\text{\boldmath$m$}}}
\def\vecell{{\text{\boldmath$\ell$}}}
\def\vecp{{\text{\boldmath$p$}}}
\def\vec0{{\text{\boldmath$0$}}}
\newcommand{\ve}{\varepsilon}
\newcommand{\sfrac}[2]{{\textstyle \frac {#1}{#2}}}
\newcommand{\SL}{\mathrm{SL}}
\newtheorem{thm}{Theorem}[section]
\newtheorem{lem}[thm]{Lemma}
\newtheorem{prop}[thm]{Proposition}
\newtheorem{cor}[thm]{Corollary}
\theoremstyle{remark}
\newtheorem{remark}[thm]{Remark}
\numberwithin{equation}{section}
\begin{document}
\title[On the distribution of angles in a random lattice]{On the distribution of angles between the $N$ shortest vectors in a random lattice}
\author{Anders S\"odergren}
\address{Department of Mathematics, Uppsala University, Box 480,\newline
\rule[0ex]{0ex}{0ex}\hspace{8pt} SE-75106 Uppsala, Sweden\newline
\rule[0ex]{0ex}{0ex}\hspace{8pt} {\tt sodergren@math.uu.se}} 
\date{\today}
\thanks{The research has been supported by the Swedish Research Council, Research Grant 621-2007-6352.}

\maketitle

\begin{abstract}
We determine the joint distribution of the lengths of, and angles between, the $N$ shortest lattice vectors in a random $n$-dimensional lattice as $n\to\infty$. Moreover we interpret the result in terms of eigenvalues and eigenfunctions of the Laplacian on flat tori. Finally we discuss the limit distribution of any finite number of successive minima of a random $n$-dimensional lattice as $n\to\infty$. 
\end{abstract}

\section{Introduction}

For $n\in\Z_{\geq1}$ let $X_n$ denote the space of $n$-dimensional lattices of covolume $1$. We realize $X_n$ as the homogeneous space $\SL(n,\Z)\backslash \SL(n,\R)$, where $\SL(n,\Z)g$ corresponds to the lattice $\Z^ng\subset\R^n$. We further let $\mu_n$ denote the Haar measure on $\SL(n,\R)$, normalized so that it represents the unique right $\SL(n,\R)$-invariant probability measure on the space $X_n$.

Given a lattice $L\in X_n$, we order its non-zero vectors by increasing lengths as $\pm\vecv_1,\pm\vecv_2,\pm\vecv_3,\ldots$. The first several vectors in this list are important objects attached to $L$. Indeed, from knowledge of a relatively short vector  in any given lattice $L$, one can obtain integer solutions to a variety of different problems, including that of factoring polynomials with rational coefficients; cf., e.g., \cite{LLL}, \cite{LLL2}. Note also that the shortest non-zero vector of $L$, i.e.\ $\vecv_1$, determines the density of the sphere packing based on $L$, so that finding the lattice $L\in X_n$ which maximizes the length of $\vecv_1$ is equivalent to the classical problem of finding the maximal density of a lattice sphere packing in $\R^n$.

Our purpose in the present paper is to study the distribution of lengths and relative positions of the vectors $\pm\vecv_1,\ldots,\pm\vecv_N$ for a random lattice in large dimension, i.e.\ an $n$-dimensional lattice chosen according to the measure $\mu_n$ on $X_n$, for $N$ fixed and $n\to\infty$.

There exist in the literature many different notions of "random" lattices in $\R^n$, cf., e.g., \cite{NDB}. However, the probability measure $\mu_n$ used in the present paper is the natural one when viewing the space of lattices, $X_n$, as a homogeneous space. We also note that, in recent years, probabilities defined in terms of $\mu_n$-random lattices have appeared in a number of applications in number theory and mathematical physics; cf.\  \cite{ElkMM}, \cite{jens1}, \cite{JMAS1}, \cite{JMAS2}, \cite{jens2},  \cite{ASAV}. 

In a previous paper \cite{jag} we study, for large $n$, the distribution of lengths of lattice vectors in a random lattice $L\in X_n$. With $\pm\vecv_1,\pm\vecv_2,\pm\vecv_3,\ldots$ as above we set $\ell_j=|\vecv_j|$ (thus $0<\ell_1\leq \ell_2\leq \ell_3\leq\ldots$), and also define
\begin{align*}
 \mathcal V_j:=\frac{\pi^{n/2}}{\Gamma(\frac{n}{2}+1)}\ell_j^n\,,
\end{align*}
so that $\mathcal V_j$ is the volume of an $n$-dimensional ball of radius $\ell_j$. Our main result in \cite{jag} states that, as $n\to\infty$, the volumes $\{\mathcal V_j\}_{j=1}^{\infty}$ determined by a random lattice $L\in X_n$ behave like the points of a Poisson process on the positive real line with constant intensity $\frac{1}{2}$. 

In the present paper we investigate also the distribution of the angles between $\vecv_1,\ldots,\vecv_N$ for a random lattice $L\in X_n$.  Since the vectors $\{\vecv_j\}_{j=1}^{\infty}$ are determined only up to sign, the angles between them are a priori not well-defined. We avoid this ambiguity by introducing a "symmetrized" angle measure $\varphi$, taking values in the interval $[0,\frac{\pi}{2}]$. To be more specific, we denote the Euclidean angle between the vectors $\vecx_1,\vecx_2\in\R^n\setminus\{\vec0\}$ by $\phi(\vecx_1,\vecx_2)$ and define 
\begin{align*}
\varphi(\vecx_1,\vecx_2):=\begin{cases}
\phi(\vecx_1,\vecx_2)&\text{if $\phi(\vecx_1,\vecx_2)\in[0,\frac{\pi}{2}]$,}\\
\pi-\phi(\vecx_1,\vecx_2)&\text{otherwise.}
\end{cases}
\end{align*}
Given $L\in X_n$ and $i,j\in\Z_{\geq1}$,  we let $\varphi_{ij}:=\varphi(\vecv_i,\vecv_j)$. Our first result states that for a random lattice $L\in X_n$ the angles $\{\varphi_{ij}\}_{i<j}$ accumulate to $\frac{\pi}{2}$, as $n\to\infty$, with a rate comparable with $n^{-\frac{1}{2}}$. 

\begin{prop}\label{concentrate}
For any fixed $N\in\Z_{\geq2}$, the probability 
\begin{align*}
\text{Prob}_{\mu_n}\Big\{L\in X_n\,\,\big|\,\,\exists i<j\leq N:\sfrac{\pi}{2}-\varphi_{ij}>\sfrac{C}{\sqrt n}\Big\}
\end{align*}
tends to $0$ as $C,n\to\infty$.
\end{prop}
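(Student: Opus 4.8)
The plan is to combine two inputs: the Poisson‑type behaviour of the volumes $\mathcal V_j$ established in \cite{jag}, which pins down the common length scale of $\vecv_1,\dots,\vecv_N$, and a mean value (second moment) estimate for \emph{pairs} of lattice vectors, which shows that two short vectors making a small $\varphi$‑angle form a rare configuration. The latter is ultimately reduced to the elementary fact that the normalized surface measure of the near‑equatorial band $\{\vecu\in S^{n-1}:|\langle\vecu,\vece_1\rangle|>s\}$ is exponentially small in $ns^2$, so that widening the band from the natural scale $n^{-1/2}$ to $C\,n^{-1/2}$ makes it negligible as $C\to\infty$.

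\emph{Step 1 (reduction).} Fix $v>0$ and let $T=T_n>0$ be defined by $\frac{\pi^{n/2}}{\Gamma(n/2+1)}T^n=v$, so that $\vol B_T=v$, where $B_T\subset\R^n$ is the ball of radius $T$ about $\vec0$. We may assume $C\leq\frac\pi2\sqrt n$, since otherwise the probability in question vanishes. Put $s_n:=\cos\big(\frac\pi2-\frac{C}{\sqrt n}\big)=\sin\big(\frac{C}{\sqrt n}\big)$, so that for $i<j$ the condition $\frac\pi2-\varphi_{ij}>\frac{C}{\sqrt n}$ is equivalent to $|\langle\vecv_i,\vecv_j\rangle|>s_n\,\ell_i\ell_j$. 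By the main theorem of \cite{jag}, $\mathcal V_N$ converges in distribution, as $n\to\infty$, to a sum $G_N$ of $N$ independent exponential random variables of mean $2$; hence $\text{Prob}_{\mu_n}(\mathcal V_N>v)\to\text{Prob}(G_N>v)=:g(v)$ with $g(v)\to0$ as $v\to\infty$. The same result shows that $\mathcal V_N/\mathcal V_1$ is bounded in probability uniformly in $n$ (as $\mathcal V_N$ is bounded above and $\mathcal V_1$ bounded away from $0$ in probability), so $\text{Prob}_{\mu_n}(\mathcal V_N\geq2^n\mathcal V_1)\to0$. On the complement of $\{\mathcal V_N>v\}\cup\{\mathcal V_N\geq2^n\mathcal V_1\}$ all of $\vecv_1,\dots,\vecv_N$ lie in $B_T$ and no two of them are parallel (if $\vecv_j=k\vecv_i$ with $i<j$ and $|k|\geq2$, then $\mathcal V_j=|k|^n\mathcal V_i\geq2^n\mathcal V_1$). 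Letting $\mathcal P_n(L)$ be the set of ordered pairs $(\vecx,\vecy)$ of linearly independent vectors of $L$ with $|\vecx|,|\vecy|\leq T$ and $|\langle\vecx,\vecy\rangle|>s_n|\vecx|\,|\vecy|$, Markov's inequality yields
\begin{align*}
&\text{Prob}_{\mu_n}\Big\{\exists\,i<j\leq N:\ \tfrac\pi2-\varphi_{ij}>\tfrac{C}{\sqrt n}\Big\}\\
&\qquad\leq\ \text{Prob}_{\mu_n}(\mathcal V_N>v)+\text{Prob}_{\mu_n}(\mathcal V_N\geq2^n\mathcal V_1)+\mathbb E_{\mu_n}\big[\#\mathcal P_n(L)\big].
\end{align*}

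\emph{Step 2 (mean value of clustered pairs, and conclusion).} By Rogers' mean value formula for sums over linearly independent pairs of lattice vectors (valid for $n\geq3$, applied, after smoothing the indicator if necessary, to the bounded compactly supported integrand in question), there is an absolute constant $C_1$ with
\begin{align*}
\mathbb E_{\mu_n}\big[\#\mathcal P_n(L)\big]\ \leq\ C_1\,\vol\Big\{(\vecx,\vecy)\in B_T\times B_T:\ |\langle\vecx,\vecy\rangle|>s_n|\vecx|\,|\vecy|\Big\}\ =\ C_1\,(\vol B_T)^2\,p_n(s_n),
\end{align*}
the last equality by polar coordinates in each factor, where $p_n(s)$ denotes the normalized surface measure of $\{\vecu\in S^{n-1}:|\langle\vecu,\vece_1\rangle|>s\}$. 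Since $\langle\vecu,\vece_1\rangle$ has density proportional to $(1-t^2)^{(n-3)/2}$, the standard sub‑Gaussian bound gives $p_n(s)\leq C_0e^{-c_0ns^2}$ for absolute constants $C_0,c_0>0$; combined with $\sin x\geq\frac2\pi x$ on $[0,\frac\pi2]$ (so $ns_n^2\geq\frac4{\pi^2}C^2$), this gives $p_n(s_n)\leq C_0e^{-4c_0C^2/\pi^2}=:h(C)$, uniformly in $n$, with $h(C)\to0$ as $C\to\infty$. (In fact $p_n(s_n)\to2\big(1-\Phi(C)\big)$ as $n\to\infty$, which accounts for the rate $n^{-1/2}$ in the statement.) Feeding these estimates back, for every fixed $v>0$ we obtain
\begin{align*}
\limsup_{n\to\infty}\ \text{Prob}_{\mu_n}\Big\{\exists\,i<j\leq N:\ \tfrac\pi2-\varphi_{ij}>\tfrac{C}{\sqrt n}\Big\}\ \leq\ g(v)+C_1\,v^2\,h(C);
\end{align*}
letting $C\to\infty$ removes the last term, so the left‑hand side is $\leq g(v)$ for all $v>0$, and letting $v\to\infty$ gives $0$, which is the assertion.

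\emph{Main obstacle.} Step 1 and the spherical‑cap estimate are routine; the technical heart is Step 2. It is essential to use the second moment rather than Siegel's first‑moment formula, because near‑orthogonality of two short vectors — unlike near‑parallelism — is not witnessed by the length of any short integer combination of them. One must also be careful to restrict to linearly independent pairs, both inside the Rogers sum and among $\vecv_1,\dots,\vecv_N$: the $\asymp v$ linearly dependent short pairs (such as $(\vecx,2\vecx)$) have $\varphi$‑angle $0$ and would otherwise swamp the count. Finally, one should pin down precisely which version of Rogers' formula is invoked and verify that the correction terms for linearly independent (as opposed to primitive) pairs stay bounded as $n\to\infty$; this does not affect the conclusion.
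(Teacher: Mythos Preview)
Your proof is correct and follows essentially the same approach as the paper's: reduce to a fixed ball via the Poisson limit from \cite{jag} (the paper's \eqref{rogersthm3}), bound the expected number of ``bad'' pairs using Rogers' mean value formula (the paper's Proposition~\ref{concentration} via \eqref{huvud2}--\eqref{sinint}), and control the main term by a spherical-cap estimate (the paper computes the exact asymptotic $\tfrac{V^2}{8}\bigl(1-\mathrm{erf}(C/\sqrt2)\bigr)$ in Lemma~\ref{Cint}, you use the equivalent sub-Gaussian bound). One side remark: your concern in the ``Main obstacle'' paragraph that linearly dependent short pairs number $\asymp v$ and would ``swamp the count'' is unfounded---their expected number is $O(v\cdot 2^{-n})$ and is already absorbed in the Rogers error term, which is why the paper works directly with the weaker condition $\vecm_1\neq\pm\vecm_2$; your separate handling of parallel $\vecv_i$ via the event $\{\mathcal V_N\geq 2^n\mathcal V_1\}$ is therefore unnecessary, though harmless.
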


Proposition \ref{concentrate} suggests that it is natural to study the normalized variables 
\begin{align*}
\widetilde \varphi_{ij}:=\sqrt{n}\big(\sfrac{\pi}{2}-\varphi_{ij}\big)=\sqrt{n}\big(\sfrac{\pi}{2}-\varphi(\vecv_i,\vecv_j)\big).
\end{align*} 
Given an integer $N\geq2$, we study the joint distribution of the random variables $\mathcal{V}_j$, $1\leq j\leq N$, and $\widetilde \varphi_{ij}$, $1\leq i<j\leq N$. Our main result is the following theorem, where we use the term \emph{positive Gaussian variable} to denote a random variable  $\Phi$ satisfying $\Phi=|X|$ for a random variable $X\in N(0,1)$.

\begin{thm}\label{poissongauss}
Let $N\in\Z_{\geq2}$. 
The joint distribution of $\mathcal{V}_1,\ldots,\mathcal{V}_N$ and $\widetilde\varphi_{ij}$, $1\leq i<j\leq N$, converges, as $n\to\infty$, to the joint distribution of the first $N$ points of a Poisson process on the positive real line with intensity $\frac{1}{2}$ and a collection of $\binom{N}{2}$ independent positive Gaussian variables (which are also independent of the first $N$ variables).
\end{thm}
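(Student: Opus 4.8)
The plan is to reduce the joint-distribution statement to a counting problem about lattice points in thin regions, and then apply a Siegel/Rogers-type mean-value formula together with the second-moment (variance) estimates that presumably underlie the Poisson result of \cite{jag}. Concretely, fix $V_1<\cdots<V_N$ and target angle values; for each tuple one wants to compute the limiting probability that the ordered nonzero vectors $\pm\vecv_1,\ldots,\pm\vecv_N$ of a $\mu_n$-random lattice have $\mathcal V_j$ lying in prescribed small intervals and $\widetilde\varphi_{ij}$ lying in prescribed small intervals. The key geometric observation is that, by Proposition \ref{concentrate}, with probability tending to $1$ the first $N$ vectors are nearly pairwise orthogonal, so after applying a rotation (using $\SO(n)$-invariance of $\mu_n$) one may assume $\vecv_1,\ldots,\vecv_N$ are close to the first $N$ coordinate axes. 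The event we must estimate is then: exactly one pair $\pm\vecw_j$ of lattice points falls in each of $N$ prescribed thin spherical shells near the respective axes, with the appropriate small transverse components encoding the angles, and no other nonzero lattice point is shorter than the outermost shell.

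First I would set up the indicator-function/inclusion-exclusion machinery: write the desired probability as an alternating sum of expectations of products of the form $\prod_k (\#\{\vecw\in L\setminus\{\vec0\} : \vecw\in A_k\})$ over regions $A_k$, exactly as in the standard derivation of the Poisson limit for the volumes $\mathcal V_j$ alone. The new ingredient is that the regions $A_k$ now also constrain the angular (transverse) coordinates of the counted vectors. Because the $\widetilde\varphi_{ij}$ are defined through $\sqrt n\,(\tfrac\pi2-\varphi)$, which for nearly orthogonal vectors is essentially $\sqrt n$ times the inner product of the unit vectors, the relevant transverse region for the $j$-th vector is a box of sidelength $\sim \ell_j/\sqrt n$ in the coordinate directions $1,\ldots,j-1$; the product of these $\binom N2$ transverse constraints is what should produce the Gaussian densities in the limit.

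Next I would invoke the Rogers/Siegel mean-value formulas for $\SL(n,\Z)\backslash\SL(n,\R)$: the first-moment formula $\int_{X_n}\sum_{\vecw\in L\setminus\{\vec0\}} f(\vecw)\,d\mu_n = \int_{\R^n} f(\vecx)\,d\vecx$, and the higher-moment formulas (Rogers) for $\int_{X_n}\big(\sum f\big)^k d\mu_n$, whose leading term for $f$ supported in a region of small volume is again $\big(\int_{\R^n} f\big)^k$ plus lower-order "diagonal" corrections that vanish as $n\to\infty$ when the regions are disjoint and have volumes bounded away from $0$ and $\infty$. One then computes, for the $j$-th region, $\int_{\R^n}\mathbf 1_{A_j}(\vecx)\,d\vecx$: splitting $\vecx=(x_1,\ldots,x_n)$ into a "radial/last-coordinates" part giving the usual $\tfrac12(V_j-V_{j-1})$-type contribution and the $j-1$ transverse coordinates, each integrated over an interval of length $\sim \ell_j/\sqrt n$ against the uniform measure. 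Changing variables $t_i = \sqrt n\,(x_i/\ell_j)$ and using that the (normalized) surface/volume measure on a high-dimensional sphere projected onto $O(1)$ coordinates converges to a standard Gaussian (the classical Poincaré/Maxwell–Borel lemma) is what turns each transverse integral into $\int e^{-t_i^2/2}\,dt_i/\sqrt{2\pi}$ in the limit — i.e. each $\widetilde\varphi_{ij}$ contributes an independent $|N(0,1)|$ factor, independent of the radial data.

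The main obstacle, I expect, is controlling the error terms uniformly while simultaneously taking $n\to\infty$: one must show that the Rogers higher-moment correction terms (sums over sublattices / linear dependencies among the counted points) are negligible even though the angular constraints shrink the regions in $j-1$ directions at rate $n^{-1/2}$, so that the volume of $A_j$ stays $\Theta(1)$ but its shape degenerates; this is precisely the regime where one must check that the "diagonal" contributions in Rogers' formula are $o(1)$. A secondary technical point is justifying the reduction to the near-orthogonal configuration and the rotation to coordinate axes rigorously, i.e. showing the contribution of lattices where some $\widetilde\varphi_{ij}$ is large is genuinely negligible (this is where Proposition \ref{concentrate} is used) and that the error in replacing $\tfrac\pi2-\varphi$ by the normalized inner product, and in replacing $\ell_j$ by its limiting value inside the transverse box, does not affect the limit. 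Once these estimates are in place, assembling the inclusion-exclusion sum and matching it term-by-term with the generating function of the Poisson process times the product of Gaussian integrals gives the claimed joint convergence, and since the argument computes the joint distribution function on a determining class of sets (products of intervals), convergence in distribution follows.
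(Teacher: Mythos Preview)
Your overall strategy---Rogers' mean-value formula, inclusion-exclusion, and the high-dimensional sphere-to-Gaussian limit---is the same as the paper's, but the setup has a real gap. The step where you ``apply a rotation (using $\SO(n)$-invariance of $\mu_n$)'' to place $\vecv_1,\ldots,\vecv_N$ near the coordinate axes and then count lattice points in \emph{fixed} regions $A_j\subset\R^n$ does not work: the rotation aligning $\vecv_1,\ldots,\vecv_N$ with the axes is lattice-dependent, so the $A_j$ you would feed into Rogers' formula are not fixed sets, and Rogers' formula applies only to sums of a fixed test function over lattice tuples. If instead you fix the $A_j$ near the axes a priori, the probability that a random lattice has one pair in each is $o(1)$, since the transverse constraints cost a factor $n^{-(j-1)/2}$ with nothing to compensate. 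The underlying issue is that the constraints $\widetilde\varphi_{ij}\in I_{ij}$ are \emph{relative} (between $\vecv_i$ and $\vecv_j$), not absolute (between $\vecv_j$ and $\vece_i$), so the event cannot be written as a product $\prod_j\mathbf 1_{A_j}(\vecv_j)$ of one-point indicators; Proposition~\ref{concentrate} does not rescue this and is in fact not used in the paper's proof.

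The paper instead starts with a rotation-invariant test function on the full product, $\widetilde f(\vecx_1,\ldots,\vecx_\lambda)=f\bigl(V_n|\vecx_1|^n,\ldots,V_n|\vecx_\lambda|^n,\widetilde\varphi(\vecx_1,\vecx_2),\ldots\bigr)$, applies Rogers' formula to $\sum_{\vecm_1,\ldots,\vecm_\lambda}\widetilde f(\vecm_1,\ldots,\vecm_\lambda)$, and only \emph{afterwards} exploits rotational symmetry of the Euclidean integrand to parametrise $\int_{(\R^n)^\lambda}\widetilde f$ via the triangular frame \eqref{uvectors}; at that point your Poincar\'e--Maxwell--Borel heuristic is exactly what produces the product of half-Gaussians. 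Your worry about the Rogers correction terms for ``degenerate'' regions is also misplaced: since $\widetilde f$ is bounded with support in $(B_K)^\lambda$ regardless of the angular window, the estimates of \cite{jag} give a uniform $O\bigl((3/4)^{n/2}\bigr)$ bound. Finally, the inclusion-exclusion that pins down the \emph{ordered} first $N$ vectors is carried out via auxiliary sums $R_\ell^n$ over $(N+\ell)$-tuples and the identity $\sum_{j=0}^{\ell}(-1)^j\binom{m}{j}=(-1)^\ell\binom{m-1}{\ell}$, compatible with but more explicit than your sketch.
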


As was mentioned above the limit distribution of the volumes $\{\mathcal V_j\}_{j=1}^{\infty}$ alone was determined in \cite[Thm.\ 1]{jag}. Let us also point out that the limit distribution of the variables $\widetilde \varphi_{ij}$ is natural, since it is exactly the same distribution as one gets as the asymptotic distribution of the angles between $N$ independent random unit vectors in $\R^n$, chosen from a uniform distribution on $S^{n-1}$, as $n\to\infty$; cf.\ \cite[Thm. 4]{stam}. We give a new proof of this result using our set-up, in Lemma \ref{anglelemma} and Theorem \ref{randomdirthm} below.

It is further possible to reformulate Theorem \ref{poissongauss} in the dual setting of eigenvalues and eigenfunctions of the Laplacian on flat tori $\R^n/L$ with $L\in X_n$. It is well-known that the eigenvalues of the torus $\R^n/L$ are $4\pi^2|\vecell|^2$, with $\vecell$ belonging to the dual lattice $L^*$, and that the corresponding eigenfunctions are $f_{\vecell}(\vecx):=e^{2\pi i\langle \vecell,\vecx\rangle}$. Note that, for $\vecell\neq\vec0$, the functions $f_{\vecell}$ are complex wave functions propagating in the direction of the vector $\vecell$. We also recall that "desymmetrizing" and renormalizing the eigenvalues to have mean spacing $1$ yields the sequence $\{\sfrac{1}{2}\mathcal V_j\}_{j=1}^{\infty}$ for the lattice $L^*$. In this setting Theorem \ref{poissongauss} states:

\begin{thm}
Let $N\in\Z_{\geq2}$. For a random flat torus $\R^n/L$ with $L\in X_n$, the joint distribution of the $N$ first non-zero eigenvalues ("desymmetrized" and normalized to have mean-spacing $1$) and the $\binom{N}{2}$ properly $\sqrt n$-normalized angles between the directions of propagation of the corresponding eigenfunctions converges, as $n\to\infty$, to the joint distribution of the first $N$ 
points of a Poisson process on the positive real line with intensity $1$ and a collection of $\binom{N}{2}$ independent positive Gaussian variables (which are also independent of the first $N$ variables). 
\end{thm}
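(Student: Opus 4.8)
The plan is to deduce this theorem directly from Theorem \ref{poissongauss} by passing to the dual lattice. First I would recall that $L\mapsto L^*$ corresponds, in the model $X_n=\SL(n,\Z)\backslash\SL(n,\R)$, to the map $\SL(n,\Z)g\mapsto\SL(n,\Z)\,\trans g^{-1}$, an involution of $X_n$ which preserves the Haar measure $\mu_n$; hence $L^*$ is $\mu_n$-random whenever $L$ is. It therefore suffices to study, for a $\mu_n$-random lattice $L$, the eigenvalues of the torus $\R^n/L^*$ and the propagation directions of the associated eigenfunctions, and to express these in terms of the shortest vectors $\pm\vecv_1,\pm\vecv_2,\ldots$ of $L^*$ together with the quantities $\mathcal V_j$, $\varphi_{ij}$, $\widetilde\varphi_{ij}$ built from them.

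Second, I would spell out the normalization. Writing $\ell_j=|\vecv_j|$ for $L^*$, so $0<\ell_1\le\ell_2\le\cdots$, the positive eigenvalues of $\R^n/L^*$ listed with multiplicity are $\{4\pi^2|\vecell|^2:\vecell\in L^*\setminus\{\vec0\}\}$; ``desymmetrizing'' (halving all multiplicities, which is legitimate since this multiset is invariant under $\vecell\mapsto-\vecell$) produces the non-decreasing sequence $4\pi^2\ell_1^2,4\pi^2\ell_2^2,\ldots$. Its counting function $\#\{j:4\pi^2\ell_j^2\le\lambda\}$ equals half the number of nonzero points of $L^*$ in the ball of radius $\sqrt\lambda/(2\pi)$, hence — since $L^*$ has covolume $1$ — is asymptotic to $\tfrac12\frac{\pi^{n/2}}{\Gamma(n/2+1)}\bigl(\tfrac{\sqrt\lambda}{2\pi}\bigr)^n$; applying the corresponding scaling map $\lambda\mapsto\tfrac12\frac{\pi^{n/2}}{\Gamma(n/2+1)}(2\pi)^{-n}\lambda^{n/2}$ to $\lambda_j=4\pi^2\ell_j^2$ yields exactly $\tfrac12\mathcal V_j$, so the eigenvalues normalized to mean spacing $1$ are precisely $\tfrac12\mathcal V_1,\tfrac12\mathcal V_2,\ldots$ computed from $L^*$. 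Moreover the eigenfunction attached to $\lambda_j$ is $f_{\pm\vecv_j}$, propagating in the direction $\pm\vecv_j$, so the symmetrized angle between the propagation directions of the eigenfunctions for $\lambda_i$ and $\lambda_j$ is $\varphi_{ij}$ and its $\sqrt n$-normalization is $\widetilde\varphi_{ij}$, both for $L^*$. (On the measure-zero set where some $\ell_j$ coincide, a multiple eigenvalue has no well-defined propagation direction; this does not affect the limiting law.)

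Finally I would invoke Theorem \ref{poissongauss} for the $\mu_n$-random lattice $L^*$: the joint law of $\mathcal V_1,\ldots,\mathcal V_N,\ \widetilde\varphi_{ij}\ (1\le i<j\le N)$ converges to that of the first $N$ points of a Poisson process of intensity $\tfrac12$ together with $\binom N2$ independent positive Gaussian variables, independent of the first $N$ coordinates. Composing with the homeomorphism of $\R_{>0}^{N}\times\R_{\ge0}^{\binom N2}$ that multiplies the first $N$ coordinates by $\tfrac12$ and fixes the rest, and using that the image of a Poisson process of intensity $\tfrac12$ on $\R_{>0}$ under multiplication by $\tfrac12$ is a Poisson process of intensity $1$, gives the stated limit. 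I do not anticipate any real obstacle here: all the substance lies in Theorem \ref{poissongauss}, and the only points demanding care are the Weyl-type bookkeeping identifying the mean-spacing-$1$ normalization with $\tfrac12\mathcal V_j$, the measure-preservation of $L\mapsto L^*$, and the harmless ambiguity at multiple eigenvalues.
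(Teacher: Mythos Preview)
Your approach is correct and matches the paper's: the paper gives no separate proof, treating the theorem as an immediate restatement of Theorem~\ref{poissongauss} via the spectral dictionary for flat tori together with the measure-preservation of $L\mapsto L^*$ on $X_n$. One small bookkeeping slip to fix: the eigenvalues of $\R^n/M$ are indexed by $M^*$, so your sentence ``the positive eigenvalues of $\R^n/L^*$ \ldots\ are $\{4\pi^2|\vecell|^2:\vecell\in L^*\setminus\{\vec0\}\}$'' conflates $\R^n/L$ and $\R^n/L^*$; you mean $\R^n/L$ there (with eigenvalues indexed by $L^*$), after which the measure-preservation of duality lets you apply Theorem~\ref{poissongauss} directly to the $\mu_n$-random lattice $L^*$.
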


We also mention that Proposition \ref{concentrate} can be used to determine the limit distribution of any fixed number of \textit{successive minima} of a random lattice $L\in X_n$ as $n\to\infty$. In fact we prove that for each $N\in\Z_{\geq1}$, the $N$-tuple of the first $N$ successive minima, suitably normalized, has the same limit distribution as the $N$-tuple $(\mathcal V_1,\ldots,\mathcal V_N)$ as $n\to\infty$ (cf.\ Corollary \ref{succmin}). 

We end the introduction with an outline of the paper. In Section \ref{accumulate} we prove Proposition \ref{concentrate} and some related results using Rogers' mean value formula \cite{rogers1} and the estimates in \cite[Sec.\ 3]{jag}. In Section \ref{randomdirections} we treat the asymptotic distribution of angles between random directions in $\R^n$. Although the results here are known (cf.\ \cite[Thm. 4]{stam}), we give a detailed presentation of this topic since it gives us the opportunity to introduce some arguments used also in the more involved context of Theorem \ref{poissongauss}. In Sections \ref{expectation} and \ref{proofsec} we discuss the proof of Theorem \ref{poissongauss}. In Section \ref{expectation} we prove, using Rogers' formula, the convergence of the expectation values of certain series of functions, depending on the sequences $\{\mathcal V_j\}_{j=1}^{\infty}$ and $\{\widetilde\varphi_{ij}\}_{i<j}$, as $n\to\infty$. The proof of Theorem \ref{poissongauss} is then concluded in Section \ref{proofsec} with an inclusion-exclusion argument. Finally, in Section  \ref{sucmin} we discuss the application of Proposition \ref{concentrate} to successive minima.

\section{The angles $\varphi_{ij}$ accumulate to $\frac{\pi}{2}$}\label{accumulate}

For $V>0$ and $0\leq\varphi_1<\varphi_2\leq\frac{\pi}{2}$ we consider the function $$f_{V,\varphi_1,\varphi_2}:(\R^n)^2\to\{0,1\}$$ defined by
\begin{align*}
f_{V,\varphi_1,\varphi_2}(\vecx_1,\vecx_2)=
I\big(\vecx_1,\vecx_2\in B_V\setminus\{\vec0\}\:;\:\vecx_1\neq\pm\vecx_2\:;\:\varphi(\vecx_1,\vecx_2)\in[\varphi_1,\varphi_2]\big),
\end{align*}
where $I(\cdot)$ is the indicator function and $B_V\subset\R^n$ is the closed $n$-ball of volume $V$ centered at the origin. We also set
\begin{align*}
M_{V,\varphi_1,\varphi_2}(L):=\frac{1}{8}\sum_{\vecm_1,\vecm_2\in L\setminus\{\vec0\}}f_{V,\varphi_1,\varphi_2}(\vecm_1,\vecm_2).
\end{align*}
Recall from the introduction that for any given lattice $L\in X_n$, we choose $\vecv_1,\vecv_2,\ldots\in L$ so that $0<|\vecv_1|\leq|\vecv_2|\leq\ldots$, $L=\{\vec0,\pm\vecv_1,\pm\vecv_2,\ldots\}$ and $\vecv_j\neq\pm\vecv_k$ for $j\neq k$. Thus, for given $L$, the vectors $\vecv_j$ are uniquely determined up to sign and permutation of vectors of equal length. It follows that $M_{V,\varphi_1,\varphi_2}$ is the random variable on $X_n$ which counts the number of unordered pairs of distinct non-zero lattice vectors $\vecm_1,\vecm_2\in \{\vecv_j\}_{j=1}^{\infty}\cap B_V$ with $\varphi(\vecm_1,\vecm_2)\in[\varphi_1,\varphi_2]$. 

We get our first results by studying the expectation value of $M_{V,\varphi_1,\varphi_2}$. It follows immediately from Rogers' mean value formula (cf.\ \cite{rogers1}) and the estimates in \cite[Sec.\ 3]{jag} that, for $n\geq3$,
\begin{align}\label{huvud1}
\mathbb E\big(M_{V,\varphi_1,\varphi_2}(\cdot)\big)&=\frac{1}{8}\bigg(\int_{\R^n}\int_{\R^n}f_{V,\varphi_1,\varphi_2}(\vecx_1,\vecx_2)\,d\vecx_1d\vecx_2\\
&+\int_{\R^n}f_{V,\varphi_1,\varphi_2}(\vecx,\vecx)\,d\vecx+\int_{\R^n}f_{V,\varphi_1,\varphi_2}(\vecx,-\vecx)\,d\vecx\bigg)+R(n),\nonumber
\end{align}
where $0\leq R(n)\ll2^{-n}$. Here the implied constant depends on $V$ but not on $\varphi_1$ or $\varphi_2$. From the definition of $f_{V,\varphi_1,\varphi_2}$ we get that the two last integrals in \eqref{huvud1} equal zero. Hence 
\begin{align}\label{huvud2}
\mathbb E\big(M_{V,\varphi_1,\varphi_2}(\cdot)\big)&=\frac{1}{8}\int_{\R^n}\int_{\R^n}f_{V,\varphi_1,\varphi_2}(\vecx_1,\vecx_2)\,d\vecx_1d\vecx_2+O(2^{-n}).
\end{align}

\begin{lem}\label{Lemma1}
Let $V>0$ and $0\leq\varphi_1<\varphi_2<\frac{\pi}{2}$ be fixed. Then 
\begin{align*}
\lim_{n\to\infty}\mathbb E\big(M_{V,\varphi_1,\varphi_2}(\cdot)\big)=0. 
\end{align*}
\end{lem}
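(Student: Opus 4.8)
The plan is to read off from \eqref{huvud2} that it suffices to show that the double integral $\int_{\R^n}\int_{\R^n}f_{V,\varphi_1,\varphi_2}(\vecx_1,\vecx_2)\,d\vecx_1d\vecx_2$ tends to $0$, and to evaluate this integral essentially explicitly. First I would fix $\vecx_1\in B_V\setminus\{\vec0\}$ and compute the inner integral over $\vecx_2$. The condition $\vecx_2\neq\pm\vecx_1$ excludes only a Lebesgue-null set, so the inner integral equals the Lebesgue measure of $\{\vecx_2\in B_V:\varphi(\vecx_1,\vecx_2)\in[\varphi_1,\varphi_2]\}$; passing to polar coordinates $\vecx_2=r\vecom$ with $0\leq r\leq r_V$ (where $r_V$ is the radius of $B_V$) and $\vecom\in S^{n-1}$, and using that $\varphi(\vecx_1,r\vecom)=\varphi(\vecx_1,\vecom)$ together with the rotation invariance of the surface measure $\sigma$ on $S^{n-1}$, this measure is seen to be independent of $\vecx_1$ and equal to $\frac{r_V^n}{n}\,\sigma\big(\{\vecom\in S^{n-1}:\varphi(\vece_1,\vecom)\in[\varphi_1,\varphi_2]\}\big)=Vp_n$, where $\vece_1$ is a fixed unit vector and $p_n:=\sigma(S^{n-1})^{-1}\sigma\big(\{\vecom:\varphi(\vece_1,\vecom)\in[\varphi_1,\varphi_2]\}\big)$ is the probability that a uniformly random direction in $\R^n$ makes a symmetrized angle in $[\varphi_1,\varphi_2]$ with $\vece_1$. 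Integrating once more over $\vecx_1\in B_V$ gives
\begin{align*}
\int_{\R^n}\int_{\R^n}f_{V,\varphi_1,\varphi_2}(\vecx_1,\vecx_2)\,d\vecx_1d\vecx_2=V^2p_n,
\end{align*}
hence $\mathbb E\big(M_{V,\varphi_1,\varphi_2}(\cdot)\big)=\frac{V^2}{8}p_n+O(2^{-n})$, and everything reduces to showing $p_n\to0$.

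For this I would invoke the standard fact that for a uniformly random $\vecom\in S^{n-1}$ the Euclidean angle $\phi(\vece_1,\vecom)\in[0,\pi]$ has density proportional to $\sin^{n-2}\phi$. Since $\varphi_2<\frac{\pi}{2}$, the event $\varphi(\vece_1,\vecom)\in[\varphi_1,\varphi_2]$ splits into the two disjoint events $\phi\in[\varphi_1,\varphi_2]$ and $\phi\in[\pi-\varphi_2,\pi-\varphi_1]$, which have equal probability by the symmetry of $\sin^{n-2}\phi$ about $\frac{\pi}{2}$, so that
\begin{align*}
p_n=\frac{2\int_{\varphi_1}^{\varphi_2}\sin^{n-2}\phi\,d\phi}{\int_0^{\pi}\sin^{n-2}\phi\,d\phi}.
\end{align*}
The denominator equals $\sqrt{\pi}\,\Gamma\big(\frac{n-1}{2}\big)/\Gamma\big(\frac{n}{2}\big)$, which is $\asymp n^{-1/2}$ by Stirling's formula, while the numerator is at most $(\varphi_2-\varphi_1)(\sin\varphi_2)^{n-2}$ because $\sin$ is increasing on $[\varphi_1,\varphi_2]\subset[0,\frac{\pi}{2})$. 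As $\sin\varphi_2<1$, this gives $p_n=O\big(\sqrt n\,(\sin\varphi_2)^n\big)$, which tends to $0$ and completes the proof.

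There is no real obstacle here: the only computation needing (routine) care is the $n^{-1/2}$ lower bound for $\int_0^\pi\sin^{n-2}\phi\,d\phi$, which can be obtained either from the beta-integral identity and Stirling or by simply restricting the integral to an interval of length $\asymp n^{-1/2}$ around $\frac{\pi}{2}$. The conceptual point is that the probability measure $\sin^{n-2}\phi\,d\phi\big/\int_0^\pi\sin^{n-2}t\,dt$ concentrates at $\phi=\frac{\pi}{2}$, so any interval $[\varphi_1,\varphi_2]$ staying a fixed distance away from $\frac{\pi}{2}$ receives exponentially small mass — the same mechanism that, when $\varphi_2$ is instead allowed to approach $\frac{\pi}{2}$ at rate $n^{-1/2}$, will yield Proposition \ref{concentrate}. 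I would also keep the exact relation $\mathbb E\big(M_{V,\varphi_1,\varphi_2}(\cdot)\big)=\frac{V^2}{8}p_n+O(2^{-n})$ in view, since it is the natural starting point for those later, sharper estimates.
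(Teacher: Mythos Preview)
Your proof is correct and is essentially the same as the paper's: both reduce via \eqref{huvud2} to the double integral, evaluate it by spherical coordinates to obtain (in your notation) $V^2 p_n = 2V^2\frac{\omega_{n-1}}{\omega_n}\int_{\varphi_1}^{\varphi_2}\sin^{n-2}\phi\,d\phi$, and then conclude using Stirling that this is $O(\sqrt n\,(\sin\varphi_2)^{n-2})\to 0$. The only cosmetic difference is that you package the angular factor as a probability $p_n$ with denominator $\int_0^\pi\sin^{n-2}\phi\,d\phi$, whereas the paper writes the equivalent ratio $\omega_{n-1}/\omega_n$ directly.
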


\begin{proof}
By writing the integral in \eqref{huvud2} as an iterated integral and changing to spherical coordinates in the inner integral we find that
\begin{multline}\label{sinint}
\int_{\R^n}\int_{\R^n}f_{V,\varphi_1,\varphi_2}(\vecx_1,\vecx_2)\,d\vecx_1d\vecx_2\\
=2V\frac{\omega_{n-1}{R_V}^n}{n}\int_{\varphi_1}^{\varphi_2}\sin^{n-2}(\phi)\,d\phi=2V^2\frac{\omega_{n-1}}{\omega_n}\int_{\varphi_1}^{\varphi_2}\sin^{n-2}(\phi)\,d\phi, 
\end{multline}
where $\omega_n$ is the $(n-1)$-dimensional volume of the unit sphere $S^{n-1}\subset\R^n$ and $R_V$ is the radius of the ball $B_V$. Recalling that
\begin{align}\label{omega}
\omega_n=\frac{2\pi^{n/2}}{\Gamma(n/2)} 
\end{align}
and using Stirling's formula we conclude that 
\begin{align*}
\int_{\R^n}\int_{\R^n}f_{V,\varphi_1,\varphi_2}(\vecx_1,\vecx_2)\,d\vecx_1d\vecx_2\ll V^2\sqrt{n}\sin^{n-2}(\varphi_2), 
\end{align*}
which clearly implies the desired result. 
\end{proof}

Lemma \ref{Lemma1} implies that the angles $\varphi_{ij}$ ($1\leq i<j\leq N$) accumulate to $\frac{\pi}{2}$ (cf.\ the proof of Proposition \ref{concentrate} below). In order to determine the rate of accumulation we also need the following lemma. The statement involves the error function, which is defined by
\begin{align*}
\mathrm{erf}(x):=\frac{2}{\sqrt{\pi}}\int_0^xe^{-t^2}\,dt.
\end{align*}

\begin{lem}\label{Cint}
Let $C>0$ be fixed. Then 
\begin{align*}
\frac{\omega_{n-1}}{\omega_n}\int_{\frac{\pi}{2}-\frac{C}{\sqrt{n}}}^{\frac{\pi}{2}}\sin^{n-2}(\phi)\,d\phi\to\frac{1}{2}\mathrm{erf}\Big(\frac{C}{\sqrt{2}}\Big)
\end{align*}
as $n\to\infty$.
\end{lem}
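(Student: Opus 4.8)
The plan is to evaluate the integral $I_n := \frac{\omega_{n-1}}{\omega_n}\int_{\frac{\pi}{2}-\frac{C}{\sqrt n}}^{\frac{\pi}{2}}\sin^{n-2}(\phi)\,d\phi$ by a substitution that makes the integrand look Gaussian. First I would substitute $\phi = \frac{\pi}{2} - \frac{t}{\sqrt n}$, so that $\sin(\phi) = \cos\!\big(\frac{t}{\sqrt n}\big)$ and $d\phi = -\frac{1}{\sqrt n}\,dt$, turning the integral into
\begin{align*}
I_n = \frac{\omega_{n-1}}{\omega_n}\cdot\frac{1}{\sqrt n}\int_0^{C}\cos^{n-2}\!\Big(\frac{t}{\sqrt n}\Big)\,dt.
\end{align*}
Using \eqref{omega} we have $\frac{\omega_{n-1}}{\omega_n} = \frac{\Gamma(n/2)}{\sqrt\pi\,\Gamma((n-1)/2)}$, and Stirling's formula gives $\frac{\omega_{n-1}}{\omega_n} = \sqrt{\frac{n}{2\pi}}\,(1+o(1))$ as $n\to\infty$. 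Hence the prefactor $\frac{\omega_{n-1}}{\omega_n}\cdot\frac{1}{\sqrt n}$ tends to $\frac{1}{\sqrt{2\pi}}$.

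Next I would handle the integrand pointwise. For each fixed $t\in[0,C]$ we have $\cos\!\big(\frac{t}{\sqrt n}\big) = 1 - \frac{t^2}{2n} + O(n^{-2})$, so $\log\cos^{n-2}\!\big(\frac{t}{\sqrt n}\big) = (n-2)\log\!\big(1 - \frac{t^2}{2n} + O(n^{-2})\big) \to -\frac{t^2}{2}$, i.e.\ $\cos^{n-2}\!\big(\frac{t}{\sqrt n}\big) \to e^{-t^2/2}$ as $n\to\infty$. To pass the limit inside the integral, I would invoke dominated convergence: on $[0,C]$, and for $n$ large enough that $\frac{C}{\sqrt n} \le \frac{\pi}{2}$, we have $0 \le \cos\!\big(\frac{t}{\sqrt n}\big) \le 1$, so the integrand is bounded by $1$, which is integrable on the finite interval $[0,C]$. (One can even get the cleaner bound $\cos x \le e^{-x^2/2}$ for $x\in[0,\frac{\pi}{2}]$, giving $\cos^{n-2}\!\big(\frac{t}{\sqrt n}\big) \le e^{-(n-2)t^2/(2n)} \le e^{-t^2/4}$ for $n\ge 4$, but boundedness by $1$ already suffices here since the domain is compact.) Therefore
\begin{align*}
\int_0^{C}\cos^{n-2}\!\Big(\frac{t}{\sqrt n}\Big)\,dt \to \int_0^{C} e^{-t^2/2}\,dt
\end{align*}
as $n\to\infty$.

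Combining the two pieces, $I_n \to \frac{1}{\sqrt{2\pi}}\int_0^C e^{-t^2/2}\,dt$. Finally I would rewrite this in terms of the error function via the substitution $t = \sqrt 2\, s$: $\frac{1}{\sqrt{2\pi}}\int_0^C e^{-t^2/2}\,dt = \frac{1}{\sqrt\pi}\int_0^{C/\sqrt 2} e^{-s^2}\,ds = \frac{1}{2}\,\mathrm{erf}\!\big(\frac{C}{\sqrt 2}\big)$, which is the claimed limit. I do not anticipate a genuine obstacle here; the only point requiring a little care is the asymptotic $\frac{\omega_{n-1}}{\omega_n}\sim\sqrt{n/(2\pi)}$ from Stirling — exactly the same Stirling estimate already used in the proof of Lemma \ref{Lemma1} — together with the routine justification of the interchange of limit and integral on the compact interval $[0,C]$.
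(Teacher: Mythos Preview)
Your proof is correct and follows essentially the same approach as the paper: the same substitution $\phi=\frac{\pi}{2}-\frac{t}{\sqrt n}$, the same Stirling asymptotic $\frac{\omega_{n-1}}{\omega_n\sqrt n}\to\frac{1}{\sqrt{2\pi}}$, the same pointwise limit $\cos^{n-2}(t/\sqrt n)\to e^{-t^2/2}$ via the logarithm, and dominated convergence. You add a bit more explicit detail (the dominating function and the final $\mathrm{erf}$ rewrite), but there is no substantive difference.
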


\begin{proof}
The change of variables $\phi=\frac{\pi}{2}-\frac{t}{\sqrt{n}}$ yields
\begin{align*}
\frac{\omega_{n-1}}{\omega_n}\int_{\frac{\pi}{2}-\frac{C}{\sqrt{n}}}^{\frac{\pi}{2}}\sin^{n-2}(\phi)\,d\phi=\frac{\omega_{n-1}}{\omega_n\sqrt{n}}\int_{0}^{C}\cos^{n-2}\Big(\frac{t}{\sqrt{n}}\Big)\,dt. 
\end{align*}
It follows from \eqref{omega} and a slightly more careful application of Stirling's formula that 
\begin{align}\label{volumequotient}
\lim_{n\to\infty}\frac{\omega_{n-1}}{\omega_n\sqrt{n}}=\frac{1}{\sqrt{2\pi}}.
\end{align}
By taking the logarithm and then using Taylor expansions we further obtain the pointwise limit 
\begin{align}\label{coslimit}
\lim_{n\to\infty}\cos^{n-2}\Big(\frac{t}{\sqrt{n}}\Big)=e^{-\frac{t^2}{2}}. 
\end{align}
Hence, the lemma follows by the dominated convergence theorem. 
\end{proof}

\begin{prop}\label{concentration}
Let $V>0$. For every $\ve>0$ there exist $C>0$ and $n_0\in\Z_{\geq1}$ such that
\begin{align*}
\text{Prob}_{\mu_n}\Big\{L\in X_n\,\,\big|\,\,\exists\vecm_1,\vecm_2\in (L\cap B_V)\setminus\{\vec0\}:\vecm_1\neq\pm\vecm_2\,,\,\sfrac{\pi}{2}-\varphi(\vecm_1,\vecm_2)>\sfrac{C}{\sqrt n}\Big\}<\ve
\end{align*}
for all $n\geq n_0$.
\end{prop}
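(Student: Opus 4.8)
The plan is to bound the probability by the expectation of $M_{V,0,\frac{\pi}{2}-\frac{C}{\sqrt n}}$ and then feed in the computations already available. First note that, for $n$ large enough that $\frac{\pi}{2}-\frac{C}{\sqrt n}>0$, the event
\[
\Big\{L\in X_n\;\big|\;\exists\vecm_1,\vecm_2\in (L\cap B_V)\setminus\{\vec0\}:\vecm_1\neq\pm\vecm_2,\ \sfrac{\pi}{2}-\varphi(\vecm_1,\vecm_2)>\sfrac{C}{\sqrt n}\Big\}
\]
is contained in $\{L\in X_n\mid M_{V,0,\frac{\pi}{2}-\frac{C}{\sqrt n}}(L)\geq1\}$, since replacing each $\vecm_i$ by $\pm\vecm_i$ to land in $\{\vecv_j\}_{j=1}^{\infty}$ leaves $\varphi$ unchanged, and $\varphi(\vecm_1,\vecm_2)<\frac{\pi}{2}-\frac{C}{\sqrt n}$ forces $\varphi(\vecm_1,\vecm_2)\in[0,\frac{\pi}{2}-\frac{C}{\sqrt n}]$. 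As $M_{V,0,\frac{\pi}{2}-\frac{C}{\sqrt n}}$ is a non-negative integer-valued random variable, Markov's inequality bounds the probability in the proposition by $\mathbb E\big(M_{V,0,\frac{\pi}{2}-\frac{C}{\sqrt n}}(\cdot)\big)$.

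Second, I would evaluate the limit of this expectation for a fixed $C$. By \eqref{huvud2} and \eqref{sinint},
\[
\mathbb E\big(M_{V,0,\frac{\pi}{2}-\frac{C}{\sqrt n}}(\cdot)\big)=\frac{V^2}{4}\cdot\frac{\omega_{n-1}}{\omega_n}\int_{0}^{\frac{\pi}{2}-\frac{C}{\sqrt n}}\sin^{n-2}(\phi)\,d\phi+O(2^{-n}).
\]
Splitting $\int_0^{\frac{\pi}{2}-\frac{C}{\sqrt n}}=\int_0^{\frac{\pi}{2}}-\int_{\frac{\pi}{2}-\frac{C}{\sqrt n}}^{\frac{\pi}{2}}$, the first piece is handled by the elementary identity $\int_0^{\pi}\sin^{n-2}(\phi)\,d\phi=\frac{\omega_n}{\omega_{n-1}}$ (immediate from \eqref{omega} and the Beta-integral evaluation of $\int_0^\pi\sin^{n-2}(\phi)\,d\phi$, or from slicing $S^{n-1}$ into $(n-2)$-spheres), which together with the symmetry of $\sin$ about $\frac{\pi}{2}$ gives $\frac{\omega_{n-1}}{\omega_n}\int_0^{\pi/2}\sin^{n-2}(\phi)\,d\phi=\frac12$; the second piece is exactly the quantity treated in Lemma \ref{Cint}, which tends to $\frac12\,\mathrm{erf}(C/\sqrt2)$. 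Hence
\[
\lim_{n\to\infty}\mathbb E\big(M_{V,0,\frac{\pi}{2}-\frac{C}{\sqrt n}}(\cdot)\big)=\frac{V^2}{8}\Big(1-\mathrm{erf}\big(\tfrac{C}{\sqrt2}\big)\Big).
\]

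Finally I would choose the constants in the correct order. Given $\ve>0$, use $\lim_{x\to\infty}\mathrm{erf}(x)=1$ to pick $C>0$ with $\frac{V^2}{8}\big(1-\mathrm{erf}(C/\sqrt2)\big)<\frac{\ve}{2}$; with this $C$ now fixed, the displayed limit furnishes an $n_0$ (which we may also take large enough that $\frac{\pi}{2}-\frac{C}{\sqrt n}>0$) such that $\mathbb E\big(M_{V,0,\frac{\pi}{2}-\frac{C}{\sqrt n}}(\cdot)\big)<\ve$ for all $n\geq n_0$. Combined with the Markov bound of the first step, this is precisely the assertion. I do not expect a genuine obstacle here; the only point needing attention is the order of the two limiting operations — the expectation bound is not uniform in $C$ and $n$ jointly, so $C$ must be fixed first, using the explicit limiting constant $\frac{V^2}{8}(1-\mathrm{erf}(C/\sqrt2))$, and only afterwards may $n_0$ be chosen.
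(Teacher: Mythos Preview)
Your proof is correct and follows essentially the same approach as the paper's: bound the probability via Markov by $\mathbb E\big(M_{V,0,\frac{\pi}{2}-C/\sqrt n}\big)$, compute its limit as $\frac{V^2}{8}\big(1-\mathrm{erf}(C/\sqrt2)\big)$ using Lemma~\ref{Cint} together with the complementary identity $\frac{\omega_{n-1}}{\omega_n}\int_0^{\pi/2}\sin^{n-2}\phi\,d\phi=\tfrac12$, and then choose $C$ large followed by $n_0$. The paper's version is terser---it phrases the same complement as the difference between $\lim_{n\to\infty}\mathbb E(M_{V,0,\pi/2})=V^2/8$ and $\lim_{n\to\infty}\mathbb E(M_{V,\pi/2-C/\sqrt n,\pi/2})=\frac{V^2}{8}\,\mathrm{erf}(C/\sqrt2)$---but the substance is identical.
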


\begin{proof}
For given $V>0$ and $C>0$ we have, by \eqref{huvud2}, \eqref{sinint} and Lemma \ref{Cint},
\begin{align}\label{huvud3}
\mathbb E\big(M_{V,\frac{\pi}{2}-\frac{C}{\sqrt{n}},\frac{\pi}{2}}(\cdot)\big)&\to\frac{V^2}{8}\mathrm{erf}\Big(\frac{C}{\sqrt{2}}\Big)
\end{align}
as $n\to\infty$. Since
\begin{align*}
 \lim_{C\to\infty}\frac{V^2}{8}\mathrm{erf}\Big(\frac{C}{\sqrt{2}}\Big)=\frac{V^2}{8}
\end{align*}
and this limit coincides with the value of $\lim_{n\to\infty}\mathbb E\big(M_{V,0,\frac{\pi}{2}}(\cdot)\big)$, the proposition follows.
\end{proof}

\begin{proof}[Proof of Proposition \ref{concentrate}]
We know from \cite[Thm.\ 3]{rogers3} (or \cite[Thm.\ 1]{jag}) that 
\begin{align}\label{rogersthm3}
\text{Prob}_{\mu_n}\big\{L\in X_n\,\,\big|\,\,\mathcal V_N\leq V\big\}&=\text{Prob}_{\mu_n}\big\{L\in X_n\,\,\big|\,\,\#\{j:\mathcal{V}_j\leq V\}\geq N\big\}\nonumber\\
&\to1-e^{-V/2}\sum_{k=0}^{N-1}\Big(\frac{V}{2}\Big)^k\frac{1}{k!}
\end{align}
as $n\to\infty$. By choosing $V$ large enough we can make the right hand side of \eqref{rogersthm3} as close to $1$ as we like. The result follows from this and Proposition \ref{concentration}.
\end{proof}

\section{The distribution of angles between random directions in $\R^n$}\label{randomdirections}

Let $N\geq2$. In this section we discuss the asymptotic distribution of the angles between $N$ independent random unit vectors $\vecu_1,\ldots,\vecu_N$ in $\R^n$, chosen from a uniform distribution on $S^{n-1}$, as $n\to\infty$. For $1\leq i<j\leq N$ let $\alpha_{ij}=\phi(\vecu_i,\vecu_j)$ denote the angle between $\vecu_i$ and $\vecu_j$, and set $\widetilde\alpha_{ij}=\sqrt{n}(\alpha_{ij}-\frac{\pi}{2})$. The following well-known result seems to first have been noted (in an equivalent form) by Borel \cite[Chap. V]{borel}; cf.\ \cite[Sec.\ 6.1]{DF}.

\begin{lem}\label{anglelemma}
Assume that $N=2$. Then $\widetilde\alpha_{12}$ converges in distribution to $N(0,1)$ as $n\to\infty$.
\end{lem}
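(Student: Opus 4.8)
The plan is to reduce the $N=2$ case to a one-dimensional integral and apply the asymptotic analysis already developed in Lemmas \ref{Lemma1} and \ref{Cint}. Without loss of generality I would fix $\vecu_1$ (by rotational invariance) and let $\vecu_2$ be uniform on $S^{n-1}$; then $\alpha_{12}$ depends only on the "height" coordinate $\langle\vecu_1,\vecu_2\rangle=\cos\alpha_{12}$. A standard computation with spherical coordinates shows that the density of $\alpha_{12}$ on $[0,\pi]$ is proportional to $\sin^{n-2}(\phi)$; more precisely, the probability that $\alpha_{12}\in[a,b]$ equals $\frac{\omega_{n-1}}{\omega_n}\int_a^b\sin^{n-2}(\phi)\,d\phi$, where the normalizing constant is exactly the ratio $\omega_{n-1}/\omega_n$ appearing in \eqref{sinint} (indeed this is the same computation that produced the inner integral there, now without the radial factor).

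Next I would translate this into a statement about $\widetilde\alpha_{12}=\sqrt n(\alpha_{12}-\frac\pi2)$. For fixed real $x$, the event $\widetilde\alpha_{12}\leq x$ is the event $\alpha_{12}\leq\frac\pi2+\frac{x}{\sqrt n}$. By symmetry of $\sin^{n-2}$ about $\frac\pi2$ it suffices to handle $x\geq0$; then
\begin{align*}
\text{Prob}\Big\{\widetilde\alpha_{12}\leq x\Big\}=\frac12+\frac{\omega_{n-1}}{\omega_n}\int_{\frac\pi2}^{\frac\pi2+\frac{x}{\sqrt n}}\sin^{n-2}(\phi)\,d\phi=\frac12+\frac{\omega_{n-1}}{\omega_n}\int_{\frac\pi2-\frac{x}{\sqrt n}}^{\frac\pi2}\sin^{n-2}(\phi)\,d\phi,
\end{align*}
using the substitution $\phi\mapsto\pi-\phi$ in the last step. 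The integral on the right is precisely the quantity evaluated asymptotically in Lemma \ref{Cint} (with $C=x$), so it converges to $\frac12\mathrm{erf}(x/\sqrt2)$ as $n\to\infty$. Hence $\text{Prob}\{\widetilde\alpha_{12}\leq x\}\to\frac12+\frac12\mathrm{erf}(x/\sqrt2)=\Phi(x)$, the standard normal distribution function; and the case $x<0$ follows by symmetry since $\widetilde\alpha_{12}$ is a symmetric random variable and $N(0,1)$ is symmetric. Convergence of distribution functions at every point then gives convergence in distribution.

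The only genuinely technical ingredient is the asymptotic evaluation of $\frac{\omega_{n-1}}{\omega_n}\int_{\frac\pi2-\frac{x}{\sqrt n}}^{\frac\pi2}\sin^{n-2}(\phi)\,d\phi$, and that is exactly what Lemma \ref{Cint} provides, via \eqref{volumequotient}, \eqref{coslimit} and dominated convergence. So in the present setup there is no real obstacle: the work has effectively been front-loaded into Section \ref{accumulate}. If one wanted a self-contained proof, the main point to be careful about would be justifying the interchange of limit and integral in $\int_0^x\cos^{n-2}(t/\sqrt n)\,dt\to\int_0^x e^{-t^2/2}\,dt$, which is handled by noting $0\le\cos^{n-2}(t/\sqrt n)\le 1$ on $[0,x]$ for $n$ large. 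I would remark after the proof that the same computation, applied with $\vecu_1$ fixed and a careful bookkeeping of conditional distributions, will be the seed of the $N>2$ argument in Theorem \ref{randomdirthm}.
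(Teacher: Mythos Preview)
Your proof is correct and follows essentially the same approach as the paper's: both reduce to the density $\frac{\omega_{n-1}}{\omega_n}\sin^{n-2}(\phi)$ of $\alpha_{12}$ via rotational invariance, then apply the asymptotics \eqref{volumequotient}, \eqref{coslimit} and dominated convergence. The only cosmetic difference is that you compute the CDF and invoke Lemma~\ref{Cint} as a black box (using the symmetry $\phi\mapsto\pi-\phi$), whereas the paper computes $\text{Prob}\{c<\widetilde\alpha_{12}\le c'\}$ directly and redoes the substitution $\phi=\frac\pi2+\frac{t}{\sqrt n}$ inline.
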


\begin{proof}
For all $c<c'$ we have
\begin{multline*}
\text{Prob}_{n}\Big\{
c<\widetilde\alpha_{12}\leq c'\Big\}=\text{Prob}_{n}\Big\{ \frac{\pi}{2}+\frac{c}{\sqrt{n}}<\alpha_{12}\leq \frac{\pi}{2}+\frac{c'}{\sqrt{n}}\Big\}\\
=\frac{1}{\omega_n^2}\int_{S^{n-1}}\int_{S^{n-1}}I\Big(\frac{\pi}{2}+\frac{c}{\sqrt{n}}<\alpha_{12}\leq\frac{\pi}{2}+\frac{c'}{\sqrt{n}}\Big)\,d\sigma(\vecu_1)d\sigma(\vecu_2),
\end{multline*}
where $I(\cdot)$ is the indicator function, $d\sigma$ denotes the $(n-1)$-dimensional volume measure on $S^{n-1}$ and $\omega_n$ is the volume of $S^{n-1}$. By first changing to spherical coordinates and then letting $\phi=\frac{\pi}{2}+\frac{t}{\sqrt n}$ we obtain, for all sufficiently large $n$ (depending on $c,c'$), 
\begin{align*}
&\text{Prob}_{n}\Big\{c<\widetilde\alpha_{12}\leq c'\Big\}=\frac{\omega_{n-1}}{\omega_n}\int_{\frac{\pi}{2}+\frac{c}{\sqrt{n}}}^{\frac{\pi}{2}+\frac{c'}{\sqrt{n}}}\sin^{n-2}(\phi)\,d\phi=\frac{\omega_{n-1}}{\omega_n\sqrt{n}}\int_{c}^{c'}\cos^{n-2}\Big(\frac{t}{\sqrt{n}}\Big)\,dt. 
\end{align*}
Finally, by \eqref{volumequotient}, \eqref{coslimit} and the dominated convergence theorem, we find that
\begin{align*}
\text{Prob}_{n}\Big\{c<\widetilde\alpha_{12}\leq c'\Big\}\to\frac{1}{\sqrt{2\pi}}\int_{c}^{c'}e^{-\frac{t^2}{2}}\,dt\end{align*}
as $n\to\infty$, which completes the proof of the lemma.
\end{proof}

We continue by considering the case $N\geq3$. It is clear that for a fixed dimension $n$ the normalized angles  $\widetilde\alpha_{ij}$, $1\leq i<j\leq N$, are dependent as random variables on $(S^{n-1})^N$. Nevertheless, as $n\to\infty$, these variables converge in distribution to a collection of independent normally distributed variables. In precise terms:

\begin{thm}\label{randomdirthm}
Let $N\geq3$ and $c_{ij}<c_{ij}'$ for $1\leq i<j\leq N$. Then
\begin{align*}
&\text{Prob}_{n}\Big\{c_{ij}<\widetilde\alpha_{ij}\leq c_{ij}':1\leq i<j\leq N\Big\}\to\prod_{1\leq i<j\leq N}\frac{1}{\sqrt{2\pi}}\int_{c_{ij}}^{c_{ij}'}e^{-\frac{t^2}{2}}\,dt,
\end{align*}
as $n\to\infty$. In other words the joint distribution of the variables $\widetilde\alpha_{ij}$ converges to the joint distribution of $\binom{N}{2}$ independent Gaussian variables as $n\to\infty$. 
\end{thm}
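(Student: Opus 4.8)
The plan is to argue by induction on $N\ge 2$, the base case $N=2$ being Lemma \ref{anglelemma}. For the inductive step I would condition on $\vecu_1,\dots,\vecu_{N-1}$: by the tower property the probability on the left-hand side of the theorem equals
\begin{align*}
\mathbb E_n\Big[\prod_{1\le i<j\le N-1}I\big(c_{ij}<\widetilde\alpha_{ij}\le c_{ij}'\big)\cdot g_n(\vecu_1,\dots,\vecu_{N-1})\Big],
\end{align*}
where $g_n(\vecu_1,\dots,\vecu_{N-1}):=\text{Prob}_n\{c_{iN}<\widetilde\alpha_{iN}\le c_{iN}',\ 1\le i\le N-1\mid\vecu_1,\dots,\vecu_{N-1}\}$, and it suffices to show that $g_n\to\beta:=\prod_{i=1}^{N-1}\frac{1}{\sqrt{2\pi}}\int_{c_{iN}}^{c_{iN}'}e^{-t^2/2}\,dt$ in probability. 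Indeed, writing $g_n=\beta+(g_n-\beta)$, the $\beta$-term contributes $\beta\cdot\text{Prob}_n\{c_{ij}<\widetilde\alpha_{ij}\le c_{ij}':1\le i<j\le N-1\}$, which by the inductive hypothesis (for $N=3$ this is Lemma \ref{anglelemma}; every box is a continuity set for the absolutely continuous limiting law) tends to $\beta\prod_{1\le i<j\le N-1}\frac{1}{\sqrt{2\pi}}\int_{c_{ij}}^{c_{ij}'}e^{-t^2/2}\,dt$, while the $(g_n-\beta)$-term is bounded in absolute value by $\mathbb E_n|g_n-\beta|\to0$ since $0\le g_n\le1$. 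Multiplying out, this is exactly the asserted limit for $N$.

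To analyse $g_n$, note that $\widetilde\alpha_{iN}=-\sqrt n\arcsin\langle\vecu_i,\vecu_N\rangle$, so the defining conditions constrain the inner products $\langle\vecu_i,\vecu_N\rangle$ to intervals about $0$ of length of order $n^{-1/2}$. On the event that $\vecu_1,\dots,\vecu_{N-1}$ are linearly independent, let $\vecw_1,\dots,\vecw_{N-1}$ be their Gram--Schmidt orthonormalization, extend it to an orthonormal basis $\vecw_1,\dots,\vecw_n$ of $\R^n$, and write $\vecu_N=\sum_{k=1}^n x_k\vecw_k$; then $\vecx:=(x_1,\dots,x_{N-1})$ has the distribution of the first $N-1$ coordinates of a uniform point on $S^{n-1}$ — with density proportional to $(1-|\vecx|^2)^{(n-N-1)/2}$ on $\{|\vecx|<1\}$ — and $\big(\langle\vecu_i,\vecu_N\rangle\big)_{i=1}^{N-1}=T\vecx$, where $T$ is the lower-triangular matrix with entries $\langle\vecu_i,\vecw_k\rangle$ and $TT^{\mathrm{t}}$ is the Gram matrix of $\vecu_1,\dots,\vecu_{N-1}$. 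Substituting $\vecy=\sqrt n\,T\vecx$ and letting $n\to\infty$ in essentially the manner of the proofs of Lemmas \ref{Cint} and \ref{anglelemma} (now an $(N-1)$-fold integral; using \eqref{volumequotient}, Stirling's formula applied to $\Gamma(n/2)/\Gamma((n-N+1)/2)$, and the limit behind \eqref{coslimit}), one finds that $g_n$ converges to
\begin{align*}
(\det T)^{-1}(2\pi)^{-(N-1)/2}\int_{\{-c_{iN}'<y_i<-c_{iN}\,:\,1\le i\le N-1\}}e^{-|T^{-1}\vecy|^2/2}\,d\vecy\,;
\end{align*}
in particular, if $T\to I$ then $g_n\to\prod_{i=1}^{N-1}\frac{1}{\sqrt{2\pi}}\int_{-c_{iN}'}^{-c_{iN}}e^{-y^2/2}\,dy=\beta$. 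Finally, $T\to I$ in probability: since $\mathbb E\big[\langle\vecu_i,\vecu_j\rangle^2\big]=1/n$ for $i\ne j$ (or, equivalently, by Lemma \ref{anglelemma}), Chebyshev's inequality and a union bound over the finitely many pairs show that the Gram matrix of $\vecu_1,\dots,\vecu_{N-1}$ tends to $I$ in probability, and $T$ depends continuously on this matrix through the Cholesky factorization.

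The step I expect to be the crux is the last convergence, combined with the conditioning: the limit of $g_n$ displayed above must be shown to hold \emph{uniformly} as $(n,T)\to(\infty,I)$, not merely pointwise in $T$, so that it can be coupled with the only distributional convergence of $\vecu_1,\dots,\vecu_{N-1}$. Making this rigorous requires a dominated-convergence estimate for the rescaled density that is uniform for $T$ in a fixed neighbourhood of $I$ — on which $\det T$ and $\|T^{-1}\|$ stay bounded away from $0$ and $\infty$ — together with careful bookkeeping of the dependence on $n$ of both the region of integration and the Jacobian. Everything else is of the same routine spherical-coordinate nature as the computations already carried out in Lemmas \ref{Cint} and \ref{anglelemma}.

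Finally I note that one can bypass the induction entirely. Realizing $\vecu_i=\vecX_i/|\vecX_i|$ with $\vecX_1,\dots,\vecX_N$ independent standard Gaussian vectors in $\R^n$, one has $\sqrt n\langle\vecu_i,\vecu_j\rangle=\frac{1}{\sqrt n}\langle\vecX_i,\vecX_j\rangle\cdot\frac{n}{|\vecX_i|\,|\vecX_j|}$. The multivariate central limit theorem applied to the i.i.d.\ mean-zero $\R^{\binom{N}{2}}$-valued summands $\big(X_{ik}X_{jk}\big)_{1\le i<j\le N}$, $k=1,\dots,n$ — whose common covariance matrix is the identity because $\mathbb E[X_{ik}X_{jk}X_{\ell k}X_{mk}]=0$ unless $\{i,j\}=\{\ell,m\}$ — together with the law of large numbers, Slutsky's theorem, and $\widetilde\alpha_{ij}=-\sqrt n\arcsin\langle\vecu_i,\vecu_j\rangle=-\sqrt n\langle\vecu_i,\vecu_j\rangle+o_p(1)$, then yields the joint convergence of $(\widetilde\alpha_{ij})_{1\le i<j\le N}$ to $\binom{N}{2}$ independent Gaussians directly (using also that a centred Gaussian law is invariant under negation).
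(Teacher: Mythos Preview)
Your proposal is correct and offers two routes, both genuinely different from the paper's. The paper parametrizes $\vecu_1,\ldots,\vecu_N$ by iterated spherical angles $\phi_{ij}$ as in \eqref{uvectors}, rescales $\phi_{ij}=\frac{\pi}{2}+\frac{t_{ij}}{\sqrt n}$, and then shows by Taylor expansion of the explicit inner-product formulae \eqref{i<j} that $\widetilde\alpha_{ij}=t_{ij}+O(n^{-1/2})$ uniformly on a large cube; an $\varepsilon$-widening of the constraint intervals and dominated convergence finish the job. Your inductive approach replaces this global parametrization by conditioning on $\vecu_1,\ldots,\vecu_{N-1}$ and analysing the conditional law of $\vecu_N$ via Gram--Schmidt; this is essentially the paper's computation unrolled one index at a time, and you correctly identify that the only additional work is the uniformity of the dominated-convergence estimate as $(n,T)\to(\infty,I)$, which is indeed routine once $T$ is confined to a compact neighbourhood of $I$. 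Your second route, via the Gaussian realization $\vecu_i=\vecX_i/|\vecX_i|$ and the multivariate CLT for the i.i.d.\ mean-zero vectors $(X_{ik}X_{jk})_{i<j}$ with identity covariance, is the slickest of the three: it bypasses all spherical integration and reduces the theorem to off-the-shelf limit theorems (CLT, LLN, Slutsky, continuous mapping). The trade-off is that the paper's explicit spherical computation is not done for its own sake---the same change of variables and the asymptotics \eqref{superquotient}, \eqref{hm} are reused verbatim in Section~\ref{expectation} around \eqref{spherevariables} to handle the Rogers integrals---so the paper's hands-on approach here earns its keep later, whereas your Gaussian argument, elegant as it is, would not transfer to that lattice setting.
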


This theorem follows from \cite[Thm. 4]{stam}, where in fact it is proved that we even have convergence in total variation. We give a detailed proof of Theorem \ref{randomdirthm} here using our set-up, since it gives us the opportunity to introduce some arguments used also in the more involved context of Theorem \ref{poissongauss}.

\begin{proof}[Proof of Theorem \ref{randomdirthm}]
Fix $c_{ij}<c_{ij}'$ for $1\leq i<j\leq N$. We have
\begin{align}\label{probability}
&\text{Prob}_{n}\Big\{c_{ij}<\widetilde\alpha_{ij}\leq c_{ij}':1\leq i<j\leq N\Big\}\\
&=\frac{1}{\omega_n^N}\int_{S^{n-1}}\cdots\int_{S^{n-1}}I\Big(c_{ij}<\widetilde\alpha_{ij}\leq c_{ij}':1\leq i<j\leq N\Big)\,d\sigma(\vecu_1)\ldots d\sigma(\vecu_N).\nonumber
\end{align}
Due to rotational symmetry the probability in \eqref{probability} equals (when $n\geq N$)
\begin{multline*}
\bigg(\prod_{\ell=1}^{N-1}\frac{\omega_{n-\ell}}{\omega_n}\bigg)\int_{\phi_{12}=0}^{\pi}\cdots\int_{\phi_{(N-1)N}=0}^{\pi}I\Big(c_{ij}<\widetilde\alpha_{ij}\leq c_{ij}':1\leq i<j\leq N\Big)\\
\times\prod_{1\leq i<j\leq N}\sin^{n-i-1}(\phi_{ij})\,d\phi_{(N-1)N}\ldots d\phi_{13}d\phi_{12}\,,
\end{multline*}
where we only consider unit vectors on the form
\begin{align}\label{uvectors}
\vecu_1&=(1,0,\ldots,0)\nonumber\\
\vecu_2&=(\cos \phi_{12},\sin \phi_{12},0,\ldots,0)\nonumber\\
\vecu_3&=(\cos \phi_{13},\sin \phi_{13}\cos \phi_{23},\sin \phi_{13}\sin \phi_{23},0,\ldots,0)\\
&\,\,\,\vdots\nonumber\\
\vecu_N&=(\cos \phi_{1N},\sin \phi_{1N}\cos \phi_{2N},\ldots,\sin \phi_{1N}\cdots\sin \phi_{(N-1)N},0,\ldots,0).\nonumber
\end{align}  
By the change of variables $\phi_{ij}=\frac{\pi}{2}+\frac{t_{ij}}{\sqrt n}$, $i\leq i<j\leq N$, we get
\begin{align}\label{serious}
&\text{Prob}_{n}\Big\{c_{ij}<\widetilde\alpha_{ij}\leq c_{ij}':1\leq i<j\leq N\Big\}=(n^{-\frac{1}{2}})^{\binom{N}{2}}\bigg(\prod_{\ell=1}^{N-1}\prod_{m=1}^{\ell}\frac{\omega_{n-m}}{\omega_{n-m+1}}\bigg)\\
&\times\int_{t_{12}=-\sqrt n\frac{\pi}{2}}^{\sqrt n\frac{\pi}{2}}\cdots\int_{t_{(N-1)N}=-\sqrt n\frac{\pi}{2}}^{\sqrt n\frac{\pi}{2}}I\Big(c_{ij}<\widetilde\alpha_{ij}\leq c_{ij}':1\leq i<j\leq N\Big)\nonumber\\
&\times\prod_{1\leq i<j\leq N}\cos^{n-i-1}\Big(\frac{t_{ij}}{\sqrt n}\Big)\,dt_{(N-1)N}\ldots dt_{13}dt_{12}.\nonumber
\end{align} 
We note that the double product in the first line of \eqref{serious} has $\binom{N}{2}$ factors. Hence, by \eqref{volumequotient}, we obtain
\begin{align}\label{superquotient}
\lim_{n\to\infty}(n^{-\frac{1}{2}})^{\binom{N}{2}}\prod_{\ell=1}^{N-1}\prod_{m=1}^{\ell}\frac{\omega_{n-m}}{\omega_{n-m+1}}=(2\pi)^{-\frac{1}{2}\binom{N}{2}}.
\end{align}
Also, by the same argument as in \eqref{coslimit}, we have the pointwise limit
\begin{align}\label{hm}
\lim_{n\to\infty}\prod_{1\leq i<j\leq N}\cos^{n-i-1}\Big(\frac{t_{ij}}{\sqrt n}\Big)=\prod_{1\leq i<j\leq N}e^{-\frac{{t_{ij}}^2}{2}}.
\end{align}

It remains to understand for what values of the variables $t_{ij}$ ($1\leq i<j\leq N$) the indicator function in \eqref{serious} is non-zero as $n\to\infty$. Since $|\vecu_1|=\cdots=|\vecu_N|=1$ we have
\begin{align}\label{innerproduct}
\vecu_i\cdot\vecu_j=\cos \alpha_{ij},\qquad 1\leq i<j\leq N.
\end{align}
Let us first consider the case $i=1$. By the form of the vectors in \eqref{uvectors} it is immediate that
\begin{align}\label{i=1}
\vecu_1\cdot\vecu_j=\cos \phi_{1j},\qquad 2\leq j\leq N.
\end{align}
Hence $\phi_{1j}=\alpha_{1j}$, and thus $t_{1j}=\widetilde\alpha_{1j}$, for $2\leq j\leq N$. We conclude that a necessary condition for the integrand in  \eqref{serious} to be non-zero is that $c_{1j}<t_{1j}\leq c_{1j}'$ for $2\leq j\leq N$.

In the general case $1\leq i<j\leq N$, it follows  from \eqref{uvectors} that
\begin{align}\label{i<j}
\vecu_i\cdot\vecu_j&=\sum_{m=1}^{i-1}\sin \Big(\frac{t_{mi}}{\sqrt n}\Big)\sin \Big(\frac{t_{mj}}{\sqrt n}\Big)\prod_{k=1}^{m-1}\cos \Big(\frac{t_{ki}}{\sqrt n}\Big)\cos \Big(\frac{t_{kj}}{\sqrt n}\Big)\\
&-\sin \Big(\frac{t_{ij}}{\sqrt n}\Big)\prod_{k=1}^{i-1}\cos \Big(\frac{t_{ki}}{\sqrt n}\Big)\cos \Big(\frac{t_{kj}}{\sqrt n}\Big).\nonumber
\end{align}
Set $K=1+\max\{|c_{ij}|,|c_{ij}'|:1\leq i<j\leq N\}$. Now if $(t_{12},\ldots,t_{(N-1)N})$ lies outside the box $[-K,K]^{\binom{N}{2}}$, then there are some indices $1\leq i<j\leq N$ such that $|t_{ij}|>K$ but $|t_{i'j'}|\leq K$ for all $1\leq i'<j'<j$ and $|t_{i'j}|\leq K$ for all $1\leq i'<i$. Then, by Taylor expansions in \eqref{i<j}, we get $\vecu_i\cdot\vecu_j=-\sin(t_{ij}/\sqrt n)+O(n^{-1})$, and thus 
\begin{align*}
\widetilde\alpha_{ij}=\sqrt n\arcsin\Big(\sin\Big(\frac{t_{ij}}{\sqrt n}\Big)+O(n^{-1})\Big),
\end{align*}
where the implied constant depends only on $K$ and $N$. For $n$ sufficiently large (as only depends on $K,N$) this implies $|\widetilde\alpha_{ij}|>K-1\geq\max(|c_{ij}|,|c_{ij}'|)$, so that the integrand in \eqref{serious} vanishes. Hence we have proved that for $n$ large, we may restrict the domain of integration in \eqref{serious} to $[-K,K]^{\binom{N}{2}}$. 

But for any $(t_{12},\ldots,t_{(N-1)N})\in[-K,K]^{\binom{N}{2}}$, it follows from \eqref{innerproduct} and Taylor expansions in \eqref{i<j} that $\widetilde\alpha_{ij}=t_{ij}+O(n^{-1/2})$, where the implied constant depends only on $K$ and $N$. Hence, for any fixed $\ve>0$ and large enough $n$, we get an upper (lower) bound for \eqref{serious} by replacing "$c_{ij}<\widetilde\alpha_{ij}\leq c_{ij}'$" by "$c_{ij}-\ve<t_{ij}<c_{ij}'+\ve$" ("$c_{ij}+\ve<t_{ij}<c_{ij}'-\ve$"). Using also \eqref{superquotient}, \eqref{hm} and the dominated convergence theorem, we conclude:
\begin{align*}
\limsup_{n\to\infty}\text{Prob}_n\Big\{c_{ij}<\widetilde\alpha_{ij}\leq c_{ij}':1\leq i<j\leq N\Big\}\leq\prod_{1\leq i<j\leq N}\frac{1}{\sqrt{2\pi}}\int_{c_{ij}-\ve}^{c_{ij}'+\ve}e^{-\frac{t^2}{2}}\,dt,
\end{align*}
and (assuming $\ve<\frac{1}{2}(c_{ij}'-c_{ij})$ for all $i,j$)
\begin{align*}
\liminf_{n\to\infty}\text{Prob}_n\Big\{c_{ij}<\widetilde\alpha_{ij}\leq c_{ij}':1\leq i<j\leq N\Big\}\geq\prod_{1\leq i<j\leq N}\frac{1}{\sqrt{2\pi}}\int_{c_{ij}+\ve}^{c_{ij}'-\ve}e^{-\frac{t^2}{2}}\,dt.
\end{align*}
The proof is now concluded by letting $\ve\to0$.
\end{proof}

\section{Convergence of expectation values}\label{expectation}

We begin this section by fixing some notation concerning the limiting variables in Theorem \ref{poissongauss}. We introduce the Poisson process  $\{N(t),t\geq0\}$, defined on the positive real line with constant intensity $\frac{1}{2}$, and let $T_1, T_2,T_3,\ldots$ denote the points of the process ordered in such a way that $0<T_1<T_2< T_3<\ldots$. We recall that $N(t)$ denotes the number of points falling in the interval $(0,t]$ and that $N(t)$ is Poisson distributed with expectation value $\frac{1}{2}t$. Furthermore, we let $\{\Phi_{ij}\}_{1\leq i<j<\infty}$ be a family of independent positive Gaussian variables.

Our first approach to trying to prove Theorem \ref{poissongauss} was to study moments of counting variables of the type discussed in Section \ref{accumulate} (see also \cite{jag}). In particular, for $k\geq2$, $0<V_1\leq V_2\leq\ldots\leq V_k$ and $c_{ij}\in\R_{\geq0}$, $1\leq i<j\leq k$, introduce the function $$f_{\{V_j\}_{j=1}^k,\{c_{ij}\}_{1\leq i<j\leq k}}:(\R^n)^k\to\{0,1\},$$ defined by
\begin{align*}
f_{\{V_j\}_{j=1}^k,\{c_{ij}\}_{1\leq i<j\leq k}}(\vecx_1,\ldots,\vecx_k)=I\big(\vecx_j\in B_{V_j}\setminus\{\vec0\}\:;\:
\varphi(\vecx_i,\vecx_j)\in\big[\sfrac{\pi}{2}-\sfrac{c_{ij}}{\sqrt n},\sfrac{\pi}{2}\big]\big),
\end{align*}
and the related random variable on $X_n$ defined by
\begin{align*}
M_{\{V_j\}_{j=1}^k,\{c_{ij}\}_{1\leq i<j\leq k}}(L):=\frac{1}{2^{k}}\sum_{\vecm_1,\ldots,\vecm_k\in L\setminus\{\vec0\}}f_{\{V_j\}_{j=1}^k,\{c_{ij}\}_{1\leq i<j\leq k}}(\vecm_1,\ldots,\vecm_k).
\end{align*}
Then $M_{\{V_j\}_{j=1}^k,\{c_{ij}\}_{1\leq i<j\leq k}}(L)$ equals the number of $k$-tuples of non-zero pairs of lattice vectors $\pm\vecm_1,\ldots,\pm\vecm_k$ satisfying $\pm \vecm_j\in L\cap B_{V_j}$ and $\varphi(\vecm_i,\vecm_j)\in\big[\sfrac{\pi}{2}-\sfrac{c_{ij}}{\sqrt n},\sfrac{\pi}{2}\big]$. Using a mixture of the methods appearing in this section and \cite{jag} it is possible to calculate the limits, as $n\to\infty$, of all moments of  $M_{\{V_j\}_{j=1}^k,\{c_{ij}\}_{1\leq i<j\leq k}}(L)$. Furthermore, these limits can be shown to coincide with the moments of the corresponding counting variable for the limit distribution described in Theorem \ref{poissongauss}. However, for $k\geq 3$ it is not clear whether this limiting counting variable is determined by its moments or not. Thus it is also not clear how to find a proof of Theorem \ref{poissongauss} in this direction.

Our proof of Theorem \ref{poissongauss}, presented in this section and the next, instead follows an approach suggested to us by Svante Janson. For each $j\in\Z_{\geq2}$ we let $M_j$ denote the set of all $j$-tuples $(n_1,\ldots,n_j)\in\big(\Z_{\geq1}\big)^{j}$ with pairwise distinct entries. 

\begin{thm}\label{rogersthm}
Let $k\in\Z_{\geq2}$ and  $\ell\in\Z_{\geq0}$. Then, for all bounded Borel measurable functions $f:(\R_{\geq0})^{k+\ell+\binom{k}{2}}\to\R$ with compact support,
\begin{multline}\label{expect}
\mathbb E\Big(\sum_{(n_1,\ldots,n_{k+\ell})\in M_{k+\ell}} f\big(\mathcal{V}_{n_1},\ldots,\mathcal{V}_{n_{k+\ell}},\widetilde\varphi_{n_1n_2},\widetilde\varphi_{n_1n_3},\ldots,\widetilde\varphi_{n_{k-1}n_k}\big)\Big)\\
\to\mathbb E\Big(\sum_{(n_1,\ldots,n_{k+\ell})\in M_{k+\ell}} f\big(T_{n_1},\ldots,T_{n_{k+\ell}},\Phi_{n_1n_2},\Phi_{n_1n_3},\ldots,\Phi_{n_{k-1}n_k}\big)\Big) 
\end{multline}
as $n\to\infty$. 
\end{thm}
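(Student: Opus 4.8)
The plan is to reduce the expectation on the left of \eqref{expect} to a Euclidean integral by means of Rogers' mean value formula, to evaluate that integral in generalized spherical coordinates, to apply Theorem~\ref{randomdirthm} to the resulting angular integral, and finally to recognize the outcome as the right-hand side of \eqref{expect} via the multivariate Campbell--Mecke formula for the Poisson process.

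\emph{Reduction via Rogers' formula.} For $\vecx\in\R^n\setminus\{\vec0\}$ set $\mathcal{V}(\vecx):=\frac{\pi^{n/2}}{\Gamma(n/2+1)}|\vecx|^n$ and $\widetilde\varphi(\vecx,\vecy):=\sqrt n\big(\frac{\pi}{2}-\varphi(\vecx,\vecy)\big)$, and let $\tilde f:(\R^n\setminus\{\vec0\})^{k+\ell}\to\R$ arise by substituting these quantities into the arguments of $f$. Since $\mathcal{V}$ and $\widetilde\varphi$ are even in each argument, and since distinct indices $n_i$ correspond to lattice vectors pairwise distinct up to sign, one has
\begin{align*}
\sum_{(n_1,\ldots,n_{k+\ell})\in M_{k+\ell}}f\big(\mathcal{V}_{n_1},\ldots,\widetilde\varphi_{n_{k-1}n_k}\big)=\frac{1}{2^{k+\ell}}\sum_{\substack{\vecm_1,\ldots,\vecm_{k+\ell}\in L\setminus\{\vec0\}\\ \vecm_i\neq\pm\vecm_j\ (i\neq j)}}\tilde f(\vecm_1,\ldots,\vecm_{k+\ell}),
\end{align*}
a well-defined random variable on $X_n$. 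Applying Rogers' formula (cf.\ \cite{rogers1}, \cite{rogers3}), exactly as in \eqref{huvud1}, the expectation of the right-hand side equals $\frac{1}{2^{k+\ell}}\int_{(\R^n)^{k+\ell}}\tilde f\,d\vecx$, plus a sum of degenerate terms in which the $\vecm_i$ are forced to satisfy non-trivial integer linear relations, plus an error $O(2^{-n})$. Since $f$ has compact support, $\tilde f$ is supported in $B_V^{k+\ell}$ for a fixed $V$. Degenerate terms in which some vector equals $\pm1$ times another vanish identically by the constraint $\vecm_i\neq\pm\vecm_j$; terms in which some vector is an integer multiple of absolute value $\geq2$ of another contribute $O(2^{-n})$, as in \eqref{huvud1}; and in every remaining term some vector is an integer combination, with at least two non-zero coefficients, of an independent subset of the others, so the concentration phenomenon of Section~\ref{accumulate} (the mass of $B_V$ concentrates near its boundary, while typical vectors are nearly orthogonal), together with the estimates in \cite[Sec.\ 3]{jag}, forces the corresponding integrals to be $o(1)$; the sum over all degenerate terms is convergent. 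Hence
\begin{align*}
\mathbb{E}\Big(\sum_{(n_1,\ldots,n_{k+\ell})\in M_{k+\ell}}f\big(\mathcal{V}_{n_1},\ldots,\widetilde\varphi_{n_{k-1}n_k}\big)\Big)=\frac{1}{2^{k+\ell}}\int_{(\R^n)^{k+\ell}}\tilde f(\vecx_1,\ldots,\vecx_{k+\ell})\,d\vecx_1\cdots d\vecx_{k+\ell}+o(1).
\end{align*}
I expect this step --- verifying that all degenerate Rogers terms vanish in the limit --- to be the main obstacle, although it is closely parallel to the arguments of Section~\ref{accumulate} and of \cite[Sec.\ 3]{jag}.

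\emph{Spherical coordinates and the angular limit.} Writing $\vecx_i=r_i\vecu_i$ with $\vecu_i\in S^{n-1}$ and substituting $v_i=\frac{\pi^{n/2}}{\Gamma(n/2+1)}r_i^n$, so that $r_i^{n-1}\,dr_i=\omega_n^{-1}\,dv_i$ by \eqref{omega}, and using that $f$ depends on $\vecu_1,\ldots,\vecu_{k+\ell}$ only through the angles among $\vecu_1,\ldots,\vecu_k$ (whence the remaining $\ell$ spherical integrations produce a factor $\omega_n^\ell$), one obtains
\begin{align*}
\frac{1}{2^{k+\ell}}\int_{(\R^n)^{k+\ell}}\tilde f\,d\vecx=\frac{1}{2^{k+\ell}}\int_{(\R_{>0})^{k+\ell}}\mathbb{E}\Big[f\big(v_1,\ldots,v_{k+\ell},(\widetilde\varphi_{ij})_{1\leq i<j\leq k}\big)\Big]\,dv_1\cdots dv_{k+\ell},
\end{align*}
where in the inner expectation $\vecu_1,\ldots,\vecu_k$ are independent and uniformly distributed on $S^{n-1}$ and $\widetilde\varphi_{ij}=\widetilde\varphi(\vecu_i,\vecu_j)=|\widetilde\alpha_{ij}|$ in the notation of Section~\ref{randomdirections} (a one-line check from the definition of $\varphi$). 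By Theorem~\ref{randomdirthm} --- or Lemma~\ref{anglelemma} when $k=2$ --- and the continuous mapping theorem, $(\widetilde\varphi_{ij})_{1\leq i<j\leq k}$ converges in distribution, as $n\to\infty$, to a vector of $\binom{k}{2}$ independent positive Gaussian variables $\Phi_1,\ldots,\Phi_{\binom{k}{2}}$; consequently the inner expectation converges pointwise to $F(v_1,\ldots,v_{k+\ell}):=\mathbb{E}\big[f\big(v_1,\ldots,v_{k+\ell},\Phi_1,\ldots,\Phi_{\binom{k}{2}}\big)\big]$ (for $f$ continuous this is immediate; the general bounded Borel case follows by approximation, since the relevant pre-limit densities converge pointwise --- cf.\ the computations \eqref{volumequotient}, \eqref{coslimit}, \eqref{superquotient}, \eqref{hm} --- hence in $L^1_{\mathrm{loc}}$ by Scheff\'e's lemma). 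As the inner expectation is bounded by $\|f\|_\infty$ and vanishes outside a fixed box in the $v$-variables, dominated convergence yields
\begin{align*}
\lim_{n\to\infty}\mathbb{E}\Big(\sum_{(n_1,\ldots,n_{k+\ell})\in M_{k+\ell}}f\big(\mathcal{V}_{n_1},\ldots,\widetilde\varphi_{n_{k-1}n_k}\big)\Big)=\frac{1}{2^{k+\ell}}\int_{(\R_{>0})^{k+\ell}}F(v_1,\ldots,v_{k+\ell})\,dv_1\cdots dv_{k+\ell}.
\end{align*}

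\emph{Identification with the right-hand side.} For the Poisson process $\{T_j\}$ of intensity $\frac{1}{2}$, the multivariate Campbell--Mecke formula gives $\mathbb{E}\big(\sum_{(n_1,\ldots,n_{k+\ell})\in M_{k+\ell}}h(T_{n_1},\ldots,T_{n_{k+\ell}})\big)=\frac{1}{2^{k+\ell}}\int_{(\R_{>0})^{k+\ell}}h\,dv_1\cdots dv_{k+\ell}$ for every bounded Borel $h$ with compact support. Conditioning on $\{T_j\}$ in the right-hand side of \eqref{expect} leaves only the $\Phi$'s random; since for distinct $n_1,\ldots,n_k$ the variables $\Phi_{n_in_j}$ ($1\leq i<j\leq k$) form a family of $\binom{k}{2}$ distinct, independent positive Gaussians (independent of $\{T_j\}$), the conditional expectation of the summand equals $F(T_{n_1},\ldots,T_{n_{k+\ell}})$, and applying the above formula with $h=F$ identifies the right-hand side of \eqref{expect} with $\frac{1}{2^{k+\ell}}\int_{(\R_{>0})^{k+\ell}}F\,dv_1\cdots dv_{k+\ell}$. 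This matches the limit obtained in the previous step, completing the proof.
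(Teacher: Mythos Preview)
Your approach is essentially the same as the paper's, with one pleasant repackaging: after passing to spherical coordinates you recognize the angular integral as an expectation over $k$ independent uniform directions on $S^{n-1}$ and invoke Theorem~\ref{randomdirthm}, whereas the paper re-derives this limit by an explicit change of variables (a diffeomorphism $J$ from the iterated spherical angles $\phi_{ij}$ to the pairwise angles $\alpha_{ij}$, with its Jacobian computed in closed form). Conceptually your route is cleaner; the paper's route is more self-contained.

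There is, however, a real gap in your treatment of the bounded Borel case. Theorem~\ref{randomdirthm} gives only convergence \emph{in distribution} of $(\widetilde\alpha_{ij})$, which suffices for continuous $f$ but not for arbitrary bounded Borel $f$. You patch this with Scheff\'e, asserting that ``the relevant pre-limit densities converge pointwise'' and citing \eqref{volumequotient}, \eqref{coslimit}, \eqref{superquotient}, \eqref{hm}. But those formulas describe the density in the $t_{ij}$-coordinates (the scaled iterated spherical angles), not in the $\widetilde\alpha_{ij}$-coordinates; to transfer pointwise convergence you must also show that the Jacobian of $t\mapsto\widetilde\alpha$ tends to $1$. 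That is precisely the content of the paper's $J$-computation (equivalently, the relation $\widetilde\alpha_{ij}=t_{ij}+O(n^{-1/2})$ from the discussion below \eqref{i<j}, differentiated). So either carry out that step, or bypass it by citing Stam's stronger result that the convergence in Theorem~\ref{randomdirthm} is in fact in total variation --- which immediately gives $\mathbb{E}[g(\widetilde\alpha)]\to\mathbb{E}[g(G)]$ for every bounded Borel $g$.

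On the Rogers error terms: your ``concentration phenomenon'' heuristic for the terms with at least two non-zero coefficients is unnecessary and vaguer than what is actually used. The paper simply bounds $|\tilde f|\leq M\prod_{j}\rho_K(\cdot)$ and then invokes \cite[Sec.~9]{rogers2}, \cite[Sec.~4]{rogers3} (or \cite[Prop.~2 and Lem.~2]{jag}) to get a uniform $O\big((3/4)^{n/2}\big)$ bound on the entire sum of non-trivial admissible matrices; no separate geometric argument is needed.
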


\begin{proof}
 Set $\lambda=k+\ell$ and fix a bounded Borel measurable function $f:(\R_{\geq0})^{\lambda+\binom{k}{2}}\to\R$ with compact support. Given $n$ we define the related function $\widetilde f:(\R^n)^{\lambda}\to\R$ by
 \begin{align*}
 &\widetilde f(\vecx_1,\ldots,\vecx_{\lambda})\\
 &=\begin{cases}
 f\big(V_n|\vecx_1|^n,\ldots,V_n|\vecx_{\lambda}|^n,\widetilde{\varphi}(\vecx_1,\vecx_2),\ldots,\widetilde{\varphi}(\vecx_{k-1},\vecx_k)\big)& \text{ if $\vecx_i\neq\vec0$ for $1\leq i\leq k$}\\
 0& \text{ otherwise,}
 \end{cases}
\end{align*}
where  $V_n$ is the volume of the $n$-dimensional unit ball and $\widetilde\varphi$ is given by 
\begin{align*}
\widetilde\varphi(\vecu,\vecv)=\sqrt n\big(\sfrac{\pi}{2}-\varphi(\vecu,\vecv)\big).
\end{align*}
Using Rogers' mean value formula (cf.\ \cite[Thm. 4]{rogers1} and \cite[Sec.\ 2]{jag} for details on the notation and terminology) we find that, for each sufficiently large $n$,
\begin{align}\label{rogers}
&\mathbb E\Big(\sum_{(n_1,\ldots,n_{\lambda})\in M_{\lambda}} f\big(\mathcal{V}_{n_1},\ldots,\mathcal{V}_{n_{\lambda}},\widetilde\varphi_{n_1n_2},\ldots,\widetilde\varphi_{n_{k-1}n_k}\big)\Big)\nonumber\\
&=\frac{1}{2^{\lambda}}\int_{X_n}\sum_{\vecm_1,\ldots,\vecm_{\lambda}\in L\setminus\{\vec0\}}\widetilde f(\vecm_1,\ldots,\vecm_{\lambda})I(\vecm_i=\pm\vecm_j\Leftrightarrow i= j)\,d\mu_n(L)\nonumber\\
&=\frac{1}{2^{\lambda}}\int_{\R^n}\cdots\int_{\R^n}\widetilde f(\vecx_1,\ldots,\vecx_{\lambda})I(\vecx_i=\pm\vecx_j\Leftrightarrow i= j)
\,d\vecx_1\ldots d\vecx_{\lambda}\\
&+\frac{1}{2^{\lambda}}\sum_{(\nu,\mu)}\sum_{q=1}^{\infty}\sum_{D}\Big(\frac{e_1}{q}\cdots\frac{e_m}{q}\Big)^n\int_{\R^n}\cdots\int_{\R^n}\widetilde f\Big(\sum_{h=1}^m\frac{d_{h1}}{q}\vecx_{h},\ldots,\sum_{h=1}^m\frac{d_{h \lambda}}{q}\vecx_{h}\Big)\nonumber\\
&\times I\Big(\sum_{h=1}^m\frac{d_{h i}}{q}\vecx_{h}=\pm\sum_{h=1}^m\frac{d_{h j}}{q}\vecx_{h}\Leftrightarrow i= j\Big)\,d\vecx_1\ldots d\vecx_m.\nonumber
\end{align}

We will now use the results in \cite[Sec.\ 3]{jag} to estimate the size of the sum in the last two lines  of \eqref{rogers}. First we note that all terms coming from $(\nu,\mu)$-admissible matrices $D$ having entries $d_{ij}\in\{0,\pm1\}$ and exactly one non-zero entry in each column equal zero. This is a consequence of the fact that for such matrices there are repetitions (up to sign) among the arguments in the integrand and thus the indicator function forces the corresponding integrals to be zero.  

Let $K,M>0$ be such that $\text{supp} f\subset[0,K]^{\lambda+\binom{k}{2}}$ and $|f|\leq M$. Then $\text{supp} \widetilde f\subset(B_K)^{\lambda}$ and $|\widetilde f|\leq M$ (recall that $B_K$ is the closed $n$-ball of volume $K$ centered at the origin). Let further $\rho_K$ be the characteristic function of $B_K$. Then, if the sum over $D$ is restricted to all $(\nu,\mu)$-admissible matrices not treated in the previous paragraph, we have
\begin{align*}
&\bigg|\sum_{(\nu,\mu)}\sum_{q=1}^{\infty}\sum_{D}\Big(\frac{e_1}{q}\cdots\frac{e_m}{q}\Big)^n\int_{\R^n}\cdots\int_{\R^n}\widetilde f\Big(\sum_{h=1}^m\frac{d_{h1}}{q}\vecx_{h},\ldots,\sum_{h=1}^m\frac{d_{h \lambda}}{q}\vecx_{h}\Big)\nonumber\\
&\times I\Big(\sum_{h=1}^m\frac{d_{h i}}{q}\vecx_{h}=\pm\sum_{h=1}^m\frac{d_{h j}}{q}\vecx_{h}\Leftrightarrow i= j\Big)\,d\vecx_1\ldots d\vecx_m\bigg|\\
&\leq\sum_{(\nu,\mu)}\sum_{q=1}^{\infty}\sum_{D}\Big(\frac{e_1}{q}\cdots\frac{e_m}{q}\Big)^n\int_{\R^n}\cdots\int_{\R^n}\Big|\widetilde f\Big(\sum_{h=1}^m\frac{d_{h1}}{q}\vecx_{h},\ldots,\sum_{h=1}^m\frac{d_{h \lambda}}{q}\vecx_{h}\Big)\Big|\,d\vecx_1\ldots d\vecx_m\\
&\leq M\sum_{(\nu,\mu)}\sum_{q=1}^{\infty}\sum_{D}\Big(\frac{e_1}{q}\cdots\frac{e_m}{q}\Big)^n\int_{\R^n}\cdots\int_{\R^n}\prod_{j=1}^{\lambda}\rho_K\Big(\sum_{h=1}^m\frac{d_{h j}}{q}\vecx_{h}\Big)\,d\vecx_1\ldots d\vecx_m\ll\Big(\frac{3}{4}\Big)^{\frac{n}{2}},
\end{align*}
where the implied constant depends on $\lambda$, $M$ and $K$ but not on $n$. Here the last step follows from \cite[Sec.\ 9]{rogers2} and \cite[Sec.\ 4]{rogers3} (or \cite[Prop.\ 2 and Lem.\ 2]{jag} with $k=\lambda$ and $V_1=\cdots=V_{\lambda}=K$). As a consequence the expectation value in \eqref{rogers} equals
\begin{align}\label{important}
\frac{1}{2^{\lambda}}\int_{\R^n}\cdots\int_{\R^n}\widetilde f(\vecx_1,\ldots,\vecx_{\lambda})\,d\vecx_1\ldots d\vecx_{\lambda}+O\bigg(\Big(\frac{3}{4}\Big)^{\frac{n}{2}}\bigg).
\end{align}

Next, changing to spherical coordinates and using the rotational symmetry, we find that
\begin{align}\label{spherevariables}
&\int_{\R^n}\cdots\int_{\R^n}\widetilde f(\vecx_1,\ldots,\vecx_{\lambda})\,d\vecx_1\ldots d\vecx_{\lambda}
=\omega_n^{\lambda-k}\bigg(\prod_{h=0}^{k-1}\omega_{n-h}\bigg)\\
&\times\int_{r_1=0}^{\infty}\cdots\int_{r_{\lambda}=0}^{\infty}\int_{\phi_{12}=0}^{\pi}\cdots\int_{\phi_{(k-1)k}=0}^{\pi}f\big(V_nr_1^n,\ldots,V_nr_{\lambda}^n,\widetilde\varphi(\vecu_1,\vecu_2),\ldots,\widetilde\varphi(\vecu_{k-1},\vecu_k)\big)\nonumber\\
&\times\bigg(\prod_{j=1}^{\lambda}r_j^{n-1}\bigg)\prod_{1\leq i<j\leq k}\sin^{n-i-1}(\phi_{ij})\,d\phi_{(k-1)k}\ldots d\phi_{12}dr_{\lambda}\ldots dr_1,\nonumber
\end{align}
where the vectors $\vecu_1,\ldots,\vecu_k$ are given by \eqref{uvectors} with $N=k$. We will here take the angles $$\alpha_{ij}=\phi(\vecu_i,\vecu_j)=\arccos(\vecu_i\cdot\vecu_j)$$ as new variables of integration. It is clear from \eqref{uvectors} that if $(\phi_{12},\ldots,\phi_{(k-1)k})\in(0,\pi)^{\binom{k}{2}}$ then $\vecu_1,\ldots,\vecu_k$ are linearly independent unit vectors and in particular $-1<\vecu_i\cdot\vecu_j<1$ for all $1\leq i<j\leq k$; hence $(\phi_{12},\ldots,\phi_{(k-1)k})\mapsto(\alpha_{12},\ldots,\alpha_{(k-1)k})$ is a $C^{\infty}$ map from $(0,\pi)^{\binom{k}{2}}$ into $(0,\pi)^{\binom{k}{2}}$. This map, which we denote by $J$, is easily seen to be injective (indeed, $\phi_{12}$ is uniquely determined from $\alpha_{12}$; next $\phi_{13},\phi_{23}$ are uniquely determined from $\phi_{12},\alpha_{13},\alpha_{23}$; next $\phi_{14},\phi_{24},\phi_{34}$ are uniquely determined from $\phi_{12},\phi_{13},\phi_{23},\alpha_{14},\alpha_{24},\alpha_{34}$, and so on). Note also that $\frac{\partial\alpha_{ij}}{\partial\phi_{i'j'}}=0$ when $j<j'$ , and when $j=j'$ and $i<i'$; thus when properly ordered the Jacobian matrix of $J$ is lower triangular, and we obtain for its determinant:
\begin{multline*}
\det\frac{\partial(\alpha_{12},\ldots,\alpha_{(k-1)k})}{\partial(\phi_{12},\ldots,\phi_{(k-1)k})}=\prod_{1\leq i<j\leq k}\frac{\partial\alpha_{ij}}{\partial\phi_{ij}}\\
=\prod_{1\leq i<j\leq k}\frac{(\sin \phi_{ij})\big(\prod_{\ell=1}^{i-1}\sin \phi_{\ell i}\sin \phi_{\ell j}\big)}{\sin \alpha_{ij}}=\prod_{1\leq i<j\leq k}\frac{(\sin \phi_{ij})^{k-i}}{\sin \alpha_{ij}}.
\end{multline*} 
This is non-zero for all $(\phi_{12},\ldots,\phi_{(k-1)k})\in(0,\pi)^{\binom{k}{2}}$. Hence $J$ is in fact a $C^{\infty}$ diffeomorphism from $(0,\pi)^{\binom{k}{2}}$ onto an open subset $\Omega\subset(0,\pi)^{\binom{k}{2}}$, and, letting also $s_j=V_nr_j^n$, $1\leq j\leq \lambda$, it follows that \eqref{spherevariables} equals
\begin{multline*}
\bigg(\prod_{h=1}^{k-1}\prod_{m=1}^{h}\frac{\omega_{n-m}}{\omega_{n-m+1}}\bigg)
\int_{s_1=0}^{\infty}\cdots\int_{s_{\lambda}=0}^{\infty}\int_{\Omega}f\Big(s_1,\ldots,s_{\lambda},\sqrt n\big|\alpha_{12}-\sfrac{\pi}{2}\big|,\ldots,\sqrt n\big|\alpha_{(k-1)k}-\sfrac{\pi}{2}\big|\Big)\\
\times\prod_{1\leq i<j\leq k}\Big(\sin^{n-k-1}(\phi_{ij})\sin(\alpha_{ij})\Big)\,d\alpha_{(k-1)k}\ldots d\alpha_{12}ds_{\lambda}\ldots ds_1.
\end{multline*}
Recall that $\text{supp} f\subset[0,K]^{\lambda+\binom{k}{2}}$. Set $\vecp=(\frac{\pi}{2},\ldots,\frac{\pi}{2})\in\R^{\binom{k}{2}}$ and note that $J(\vecp)=\vecp$; hence $\Omega$ contains a neighbourhood of $\vecp$. In particular, for $n$ sufficiently large, we have $\vecp+n^{-\frac{1}{2}}[-K,K]^{\binom{k}{2}}\subset\Omega$, and we thus get, with $\widetilde\alpha_{ij}:=\sqrt n(\alpha_{ij}-\frac{\pi}{2})$,
\begin{multline*}
=\big(n^{-\frac{1}{2}}\big)^{\binom{k}{2}}\bigg(\prod_{h=1}^{k-1}\prod_{m=1}^{h}\frac{\omega_{n-m}}{\omega_{n-m+1}}\bigg)
\int_{s_1=0}^{\infty}\cdots\int_{s_{\lambda}=0}^{\infty}\nonumber\\
\times\int_{\widetilde\alpha_{12}=-\infty}^{\infty}\cdots\int_{\widetilde\alpha_{(k-1)k}=-\infty}^{\infty}
f\big(s_1,\ldots,s_{\lambda},|\widetilde\alpha_{12}|,\ldots,|\widetilde\alpha_{(k-1)k}|\big)\\
\times\prod_{1\leq i<j\leq k}\Big(\sin^{n-k-1}(\phi_{ij})\cos\Big(\frac{\widetilde\alpha_{ij}}{\sqrt n}\Big)\Big)\,d\widetilde\alpha_{(k-1)k}\ldots d\widetilde\alpha_{12}ds_{\lambda}\ldots ds_1.\nonumber
\end{multline*} 
In this expression we, of course, understand that $(\phi_{12},\ldots,\phi_{(k-1)k})=J^{-1}\big(\vecp+n^{-\frac{1}{2}}(\widetilde\alpha_{12},\ldots,\widetilde\alpha_{(k-1)k})\big)$ when $\vecp+n^{-\frac{1}{2}}(\widetilde\alpha_{12},\ldots,\widetilde\alpha_{(k-1)k})\in\Omega$, and we may leave $(\phi_{12},\ldots,\phi_{(k-1)k})$ undefined for all other $(\widetilde\alpha_{12},\ldots,\widetilde\alpha_{(k-1)k})$, since there we anyway have $f\big(s_1,\ldots,s_{\lambda},|\widetilde\alpha_{12}|,\ldots,|\widetilde\alpha_{(k-1)k}|\big)=0$. As in the discussion below \eqref{i<j} we have, for any fixed $(\widetilde\alpha_{12},\ldots,\widetilde\alpha_{(k-1)k})$,
\begin{align*}
\sin^{n-k-1}(\phi_{ij})=\Big(1-\sfrac{1}{2}n^{-1}\widetilde\alpha_{ij}^2+O\big(n^{-\frac{3}{2}}\big)\Big)^{n-k-1}\to e^{-\frac{1}{2}\widetilde\alpha_{ij}^2}
\end{align*}
as $n\to\infty$. Hence, using again the fact that $f$ has compact support, combined with \eqref{superquotient} and the dominated convergence theorem, we conclude that
\begin{align}\label{formula}
&\mathbb E\Big(\sum_{(n_1,\ldots,n_{\lambda})\in M_{\lambda}} f\big(\mathcal{V}_{n_1},\ldots,\mathcal{V}_{n_{\lambda}},\widetilde\varphi_{n_1n_2},\ldots,\widetilde\varphi_{n_{k-1}n_k}\big)\Big)\\
&\to2^{-\lambda}\Big(\frac{2}{\pi}\Big)^{\frac{1}{2}\binom{k}{2}}\int_{s_1=0}^{\infty}\cdots\int_{s_{\lambda}=0}^{\infty}\int_{\eta_{12}=0}^{\infty}\cdots\int_{\eta_{(k-1)k}=0}^{\infty}
f\big(s_1,\ldots,s_{\lambda},\eta_{12},\ldots,\eta_{(k-1)k}\big)\nonumber\\
&\times\prod_{1\leq i<j\leq k}e^{-\frac{{\eta_{ij}}^2}{2}}\,d\eta_{(k-1)k}\ldots d\eta_{12}ds_{\lambda}\ldots ds_1\nonumber
\end{align}
as $n\to\infty$.

Finally, we show that the right hand side of \eqref{expect} actually equals the right hand side of \eqref{formula}. We have
\begin{multline}\label{Kingman}
\mathbb E\Big(\sum_{(n_1,\ldots,n_{\lambda})\in M_{\lambda}} f\big(T_{n_1},\ldots,T_{n_{\lambda}},\Phi_{n_1n_2},\ldots,\Phi_{n_{k-1}n_k}\big)\Big)\\
=\mathbb E\Big(\sum_{(n_1,\ldots,n_{\lambda})\in M_{\lambda}} F\big(T_{n_1},\ldots,T_{n_{\lambda}}\big)\Big),
\end{multline}
where $F(x_1,\ldots,x_{\lambda})$ equals
\begin{multline}\label{F}
\Big(\frac{2}{\pi}\Big)^{\frac{1}{2}\binom{k}{2}}\int_{t_{12}=0}^{\infty}\cdots\int_{t_{(k-1)k}=0}^{\infty}f\big(x_1,\ldots,x_{\lambda},t_{12},\ldots,t_{(k-1)k}\big)\\
\times\prod_{1\leq i<j\leq k}e^{-\frac{{t_{ij}}^2}{2}}\,dt_{(k-1)k}\ldots dt_{12}.
\end{multline} 
Now, since $f$ in fact belongs to $L^1\big((\R_{\geq0})^{\lambda+\binom{k}{2}}\big)$, it is clear that the function $F$ defined by \eqref{F} belongs to $L^1\big((\R_{\geq0})^{\lambda}\big)$. Hence, by Campbell's Theorem (cf.\ Lemma \ref{lastlemma} below), the expectation value in \eqref{Kingman} (i.e.\ the right hand side of \eqref{expect}) equals the right hand side of  \eqref{formula}. This concludes the proof of the theorem.
\end{proof}

As we are not aware of a reference giving the precise version of Campbell's Theorem needed above, we state it here as a lemma.

\begin{lem}\label{lastlemma}
Let $k\in\Z_{\geq2}$. Then 
\begin{align}\label{functional}
\mathbb E\Big(\sum_{(n_1,\ldots,n_{k})\in M_{k}} f\big(T_{n_1},\ldots,T_{n_{k}}\big)\Big)=2^{-k}\int_{(\R_{\geq0})^k}f(x_1,\ldots,x_k)\,dx_1\ldots dx_k
\end{align}
for all $f\in L^1\big((\R_{\geq0})^k\big)$. 
\end{lem}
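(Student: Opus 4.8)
The plan is to reduce the statement to the classical description of a Poisson process in terms of its order statistics, and then compute. Recall that for the Poisson process $\{N(t)\}$ on $(0,\infty)$ with intensity $\tfrac12$, the number of points $N(t)$ in $(0,t]$ is Poisson$(t/2)$, and conditionally on $N(t)=m$ the points $T_1<\cdots<T_m$ in $(0,t]$ are distributed as the order statistics of $m$ i.i.d.\ uniform points on $(0,t]$. Equivalently — and this is the form I would use — the point process $\sum_j \delta_{T_j}$ on $(0,\infty)$ has, for each $m\ge1$, $m$-th factorial moment measure equal to $\bigl(\tfrac12\bigr)^m$ times Lebesgue measure on $(\R_{\geq0})^m$; this is exactly the content of the Mecke/Campbell formula for a Poisson process with intensity measure $\tfrac12\,dx$, and it is precisely \eqref{functional} in the case $f=\prod_{i=1}^k \mathbf 1_{A_i}$ with $A_i\subset\R_{\geq0}$ bounded Borel. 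Since $\sum_{(n_1,\ldots,n_k)\in M_k}\prod_{i} \mathbf 1_{A_i}(T_{n_i})$ counts ordered $k$-tuples of \emph{distinct} points, one for each $A_i$, its expectation is the $k$-th factorial moment measure evaluated at $A_1\times\cdots\times A_k$, giving $2^{-k}\prod_i |A_i|$, which is the right-hand side of \eqref{functional}.

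First I would establish \eqref{functional} for such product indicator functions $f=\prod_{i=1}^k\mathbf 1_{A_i}$ with the $A_i$ bounded. This is the one genuine computation: writing $A_i\subset(0,t]$ for $t$ large, one conditions on $N(t)=m$ (only $m\ge k$ contributes), uses the uniform order-statistics description to see that $\mathbb E\bigl(\sum_{(n_1,\ldots,n_k)\in M_k}\prod_i\mathbf 1_{A_i}(T_{n_i})\,\big|\,N(t)=m\bigr) = m(m-1)\cdots(m-k+1)\,t^{-k}\prod_i|A_i|$ (the $k$-tuple of distinct indices can be placed in $m(m-1)\cdots(m-k+1)$ ways, and each chosen point lands in the prescribed $A_i$ with probability $|A_i|/t$, independently across the $k$ distinct points since the uniforms are independent), and then sums against the Poisson$(t/2)$ weights. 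The identity $\sum_{m\ge k} e^{-t/2}(t/2)^m/m!\cdot m(m-1)\cdots(m-k+1) = (t/2)^k$ (the $k$-th factorial moment of Poisson$(t/2)$) finishes this case: the result is $(t/2)^k t^{-k}\prod_i|A_i| = 2^{-k}\prod_i|A_i|$.

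Next I would promote this to all $f\in L^1\bigl((\R_{\geq0})^k\bigr)$ by a standard measure-theoretic extension. Both sides of \eqref{functional} are linear in $f$; the right-hand side is $2^{-k}\|f\|_{L^1}$ in absolute value (after applying the identity to $|f|$), and the left-hand side, for $f\ge0$, is monotone in $f$ and, by the product-indicator case plus monotone convergence, agrees with the right-hand side on all nonnegative simple functions built from bounded rectangles. By monotone convergence it then agrees on all nonnegative Borel $f$ (with the value $+\infty$ allowed a priori, but finite when $f\in L^1$), and splitting $f=f^+-f^-$ gives the general statement; in particular the left-hand sum converges absolutely a.s.\ and in $L^1$ when $f\in L^1$. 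One should note in passing that the Poisson process a.s.\ has no two points equal, so restricting the sum to $M_k$ (distinct indices) versus all of $(\Z_{\geq1})^k$ differs only by diagonal terms, which contribute a set of measure zero on the right and hence vanish — this is why the factorial moment measure, rather than the full $k$-th moment measure, is the relevant object.

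The only mild obstacle is bookkeeping rather than mathematics: making sure the uniform order-statistics argument correctly accounts for the ordering $T_1<T_2<\cdots$ — i.e.\ that summing over $(n_1,\ldots,n_k)\in M_k$ and over the $m!$ orderings of the $m$ uniforms is consistent — but this is exactly the combinatorial factor $m(m-1)\cdots(m-k+1)$ and causes no real difficulty. I would present the product-indicator computation in full and treat the extension as routine.
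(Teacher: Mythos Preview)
Your proof is correct and follows essentially the same two-step structure as the paper: establish \eqref{functional} for product functions and then extend to all of $L^1$ by standard measure theory. The only difference is cosmetic --- the paper cites Kingman's Campbell Theorem for the product case (Remark~\ref{lastremark}) and Rudin's product-measure argument for the extension, whereas you compute the product-indicator case explicitly via conditioning on $N(t)$ and the factorial moments of the Poisson distribution; your version is thus more self-contained but not materially different.
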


\begin{remark}\label{lastremark}
It follows from Campbell's Theorem \cite[p.\ 28]{king} (cf.\ also \cite[eq.\ (9)]{jag}) that the identity \eqref{functional} holds for any $f\in L^1\big((\R_{\geq0})^{k}\big)$ which can be factorized as $f(x_1,\ldots,x_{k})=\prod_{j=1}^{k}f_j(x_j)$ with some functions $f_j\in L^1(\R_{\geq0})$.
\end{remark}

\begin{proof}[Proof of Lemma \ref{lastlemma}]
By basic measure theory 
it is sufficient to prove \eqref{functional} for characteristic functions $f=\chi_A$ with $A\subset(\R_{\geq0})^k$ a measurable set  of finite measure. However, this follows from Remark \ref{lastremark} using standard techniques in the theory of product measures, as in the proof of \cite[Thm.\ 8.6]{rudin}.
\end{proof}

\section{Proof of Theorem \ref{poissongauss}}\label{proofsec}

We begin this section with a lemma dealing with the probability of repetitions in the sequence $\{\mathcal V_j\}_{j=1}^{\infty}$.

\begin{lem}\label{measurelemma}
For each $n\in \Z_{\geq1}$
\begin{align*}
\text{Prob}_{\mu_n}\Big\{L\in X_n \,\,\big|\,\, \mathcal V_1<\mathcal V_2<\mathcal V_3<\ldots\Big\}=1.
\end{align*}
\end{lem}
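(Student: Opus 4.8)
The plan is to show that for each fixed $n$, the set of lattices $L\in X_n$ for which some coincidence $\mathcal V_i=\mathcal V_j$ (equivalently $\ell_i=\ell_j$, equivalently $|\vecm_1|=|\vecm_2|$ for two lattice vectors $\vecm_1\neq\pm\vecm_2$) occurs is a $\mu_n$-null set. Since $\mathcal V_i=\mathcal V_j$ for some $i<j$ forces the existence of two lattice vectors $\vecm_1,\vecm_2\in L$ with $\vecm_1\neq\pm\vecm_2$ and $|\vecm_1|=|\vecm_2|$, it suffices to prove that
\begin{align*}
\text{Prob}_{\mu_n}\Big\{L\in X_n\,\,\big|\,\,\exists\,\vecm_1,\vecm_2\in L\setminus\{\vec0\}:\vecm_1\neq\pm\vecm_2,\ |\vecm_1|=|\vecm_2|\Big\}=0.
\end{align*}

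First I would reduce to a countable union. Writing $L=\Z^n g$ with $\SL(n,\Z)g$ ranging over $X_n$, each pair of distinct primitive index vectors $\vecb_1,\vecb_2\in\Z^n$ (with $\vecb_1\neq\pm\vecb_2$) contributes the condition $|\vecb_1 g|=|\vecb_2 g|$, i.e.\ $g$ lies in the zero set of the quadratic form $Q_{\vecb_1,\vecb_2}(g):=|\vecb_1 g|^2-|\vecb_2 g|^2$ on $\SL(n,\R)$. Actually it is cleaner to work on the full matrix space: regard $Q_{\vecb_1,\vecb_2}$ as a nonzero polynomial in the entries of $g\in\GL(n,\R)$ (it is nonzero because, e.g., one can choose $g$ scaling the first coordinate to make $|\vecb_1 g|\neq|\vecb_2 g|$ whenever $\vecb_1\neq\pm\vecb_2$). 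Its zero set is a proper real-algebraic subvariety of $\GL(n,\R)$, hence has Lebesgue measure zero in $\GL(n,\R)$; intersecting with $\SL(n,\R)$, which itself is a smooth hypersurface, the zero set of $Q_{\vecb_1,\vecb_2}$ restricted to $\SL(n,\R)$ is a proper subvariety of $\SL(n,\R)$ and thus has Haar measure zero. Taking the countable union over all unordered pairs $\{\pm\vecb_1,\pm\vecb_2\}$ of distinct primitive vectors (every coincidence $|\vecm_1|=|\vecm_2|$ among arbitrary nonzero lattice vectors reduces, after dividing out the content, to one among primitive vectors, up to the harmless finitely-many scalings) gives a null set, and its complement has full measure, proving the lemma. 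Finally one passes from $\SL(n,\R)$ to $X_n=\SL(n,\Z)\backslash\SL(n,\R)$ using that $\mu_n$ is the pushforward of normalized Haar measure; a $\Gamma$-invariant null set upstairs descends to a null set downstairs.

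The main obstacle — really the only point requiring care — is verifying that $Q_{\vecb_1,\vecb_2}$ is not identically zero on $\SL(n,\R)$ and that its vanishing locus is genuinely lower-dimensional; this is where one must exploit $\vecb_1\neq\pm\vecb_2$, since if $\vecb_1=\pm\vecb_2$ the form vanishes identically. Concretely: pick coordinates so that $\vecb_1$ and $\vecb_2$ are linearly independent (if they are, choose $g$ supported on the plane they span; if they are parallel, say $\vecb_2=c\vecb_1$ with $|c|\neq 1$, then $Q_{\vecb_1,\vecb_2}(g)=(1-c^2)|\vecb_1 g|^2\not\equiv0$); then a generic $g\in\SL(n,\R)$ plainly gives $|\vecb_1 g|\neq|\vecb_2 g|$. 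Hence $Q_{\vecb_1,\vecb_2}$ is a nonzero real-analytic (indeed polynomial) function on the connected manifold $\SL(n,\R)$, so its zero set has measure zero, completing the argument. Alternatively, and perhaps more in the spirit of Section \ref{accumulate}, one could instead invoke Proposition \ref{concentration}-type counting: for fixed $V$ the expected number of pairs with $|\vecm_1|=|\vecm_2|$ and both in $B_V$ is governed by the integral $\int\int f_{V,\varphi_1,\varphi_2}$-style expression over the measure-zero set $\{|\vecx_1|=|\vecx_2|\}$, which vanishes — but the algebraic-geometry argument above is shorter and self-contained.
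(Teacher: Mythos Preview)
Your approach is essentially the same as the paper's: reduce to a countable union over pairs $\vecb_1\neq\pm\vecb_2$ in $\Z^n\setminus\{\vec0\}$, and for each pair observe that $g\mapsto|\vecb_1 g|^2-|\vecb_2 g|^2$ is a nonzero polynomial on the matrix space, so its zero locus has Haar measure zero in $\SL(n,\R)$. The paper does exactly this (passing via the explicit Haar measure formula to Lebesgue measure on $\mathrm{Mat}_{n,n}(\R)$), after noting that $n=1$ is trivial.

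One small correction: your reduction to \emph{primitive} pairs is both unnecessary and not quite right. If $\vecm_1=2\vecb_1$ and $\vecm_2=3\vecb_2$ with $\vecb_1,\vecb_2$ primitive, then $|\vecm_1|=|\vecm_2|$ becomes $4|\vecb_1 g|^2=9|\vecb_2 g|^2$, which is not the vanishing of any $Q_{\vecb_1',\vecb_2'}$ with primitive $\vecb_1',\vecb_2'$. The fix is trivial: simply drop the word ``primitive'' and sum over all pairs $\vecb_1\neq\pm\vecb_2$ in $\Z^n\setminus\{\vec0\}$, which is still countable. This is what the paper does.
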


\begin{proof}
For $n=1$ the result holds trivially. For every given $n\geq2$ we have
\begin{align*}
&\mu_n\Big(X_n\setminus\big\{L\in X_n \mid \mathcal V_1<\mathcal V_2<\mathcal V_3<\ldots\big\}\Big)\\
&\leq\mu_n\Big(\big\{M\in\SL(n,\R) \mid \exists\,\vecm_1\neq\pm\vecm_2\in\Z^n\setminus\{\vec0\} : \|\vecm_1M\|=\|\vecm_2M\|\big\}\Big)\\
&\leq\sum_{\vecm_1\in\Z^n\setminus\{\vec0\}}\sum_{\vecm_2\in\Z^n\setminus\{\vec0,\pm\vecm_1\}}\mu_n\Big(\big\{M\in\SL(n,\R) \mid  \|\vecm_1M\|=\|\vecm_2M\|\big\}\Big).
\end{align*}
Since there are only countably many possible pairs $(\vecm_1,\vecm_2)$ it suffices to prove that each term in the last sum vanishes. By the explicit formula for the Haar measure on $\SL(n,\R)$ in terms of the matrix entries (cf.\ \cite[p.\ 7, Ex.\ 3 and p.\ 23, Ex.\ 23]{terras}), we see that it is enough to prove that, for any $\vecm_1,\vecm_2\in\Z^n\setminus\{\vec0\}$ with $\vecm_1\neq\pm\vecm_2$, 
\begin{align*}
\lambda_n\Big(\big\{M\in\mathrm{Mat}_{n,n}(\R) \mid  \|\vecm_1M\|^2-\|\vecm_2M\|^2=0\big\}\Big)=0,
\end{align*}
where $\lambda_n$ is the Lebesgue measure on $\mathrm{Mat}_{n,n}(\R)\cong\R^{n^2}$. However, this follows since $M\mapsto\|\vecm_1M\|^2-\|\vecm_2M\|^2$ is a non-zero homogeneous quadratic polynomial in the coefficients of $M$. 
\end{proof}

It follows from e.g.\ \cite[Thm.\ 5.3]{BW} that Theorem \ref{poissongauss} can be stated in the following equivalent form:

\begin{thm}\label{poissongauss2}
Let $N\in\Z_{\geq2}$.  Then, for all $f\in C\big((\R_{\geq0})^{N+\binom{N}{2}}\big)$ with compact support,
\begin{align*}
&\mathbb E\Big(f\big(\mathcal{V}_{1},\ldots,\mathcal{V}_{N},\widetilde\varphi_{12},\ldots,\widetilde\varphi_{(N-1)N}\big)\Big)\to\mathbb E\Big(f\big(T_{1},\ldots,T_{N},\Phi_{12},\ldots,\Phi_{(N-1)N}\big)\Big) 
\end{align*}
as $n\to\infty$. 
\end{thm}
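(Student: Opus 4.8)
The plan is to deduce Theorem~\ref{poissongauss2} from Theorem~\ref{rogersthm} by an inclusion-exclusion argument, in close analogy with the proof of \cite[Thm.~1]{jag}. Note first that by Lemma~\ref{measurelemma} we may work almost everywhere on the event $\mathcal V_1<\mathcal V_2<\cdots$, so that the ordered values $\mathcal V_1,\ldots,\mathcal V_N$ are recovered from the \emph{unordered} point set $\{\mathcal V_j\}_{j\geq1}$; similarly $T_1<T_2<\cdots$ almost surely for the Poisson process. The basic idea is that, given $V>0$, one can write $f\big(\mathcal V_1,\ldots,\mathcal V_N,\widetilde\varphi_{12},\ldots,\widetilde\varphi_{(N-1)N}\big)$ (on the event $\{\mathcal V_N\leq V\}$) as a finite linear combination of the sums $\sum_{(n_1,\ldots,n_{k+\ell})\in M_{k+\ell}}g(\mathcal V_{n_1},\ldots)$ appearing in Theorem~\ref{rogersthm}, where the combination encodes ``the first $N$ points among those $\leq V$, arranged in increasing order''. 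Concretely, if exactly $N+\ell$ of the $\mathcal V_j$ lie in $[0,V]$ for some $\ell\geq0$, one selects the $N$ smallest of them; summing over ordered tuples and applying an inclusion-exclusion over the ``extra'' $\ell$ points (as in the classical derivation of the Poisson limit from factorial moments) expresses the quantity of interest exactly. Since the same algebraic identity holds verbatim for the limiting variables $(T_j,\Phi_{ij})$, Theorem~\ref{rogersthm} gives termwise convergence of each piece.

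The steps, in order, are as follows. First, I would fix $V>0$ and a function $\psi\in C_c(\R_{\geq0})$ with $0\leq\psi\leq1$, $\psi\equiv1$ on $[0,V]$ and $\operatorname{supp}\psi\subset[0,V+1]$, and for $f\in C_c\big((\R_{\geq0})^{N+\binom N2}\big)$ supported in $[0,V]^{N+\binom N2}$ define, for each $\ell\geq0$, the symmetric bounded function
\begin{align*}
g_\ell(x_1,\ldots,x_{N+\ell},t_{12},\ldots,t_{(N-1)N})=\tfrac{1}{\ell!}\,f\big(x_{(1)},\ldots,x_{(N)},\ldots\big)\prod_{j=1}^{N+\ell}\psi(x_j),
\end{align*}
where $x_{(1)}\leq\cdots\leq x_{(N)}$ are the $N$ smallest of $x_1,\ldots,x_{N+\ell}$ and the angle arguments are the normalized angles between the corresponding vectors; here one must be slightly careful to thread the correct $\widetilde\varphi$-indices through, which is why $f$ must be fed the angles $\widetilde\varphi$ rather than just treat them as free variables, and this bookkeeping is handled by composing with the reindexing permutation that sorts $x_1,\ldots,x_{N+\ell}$. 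Second, I would write down the inclusion-exclusion identity $f\big(\mathcal V_1,\ldots,\mathcal V_N,\widetilde\varphi_{\bullet}\big)=\sum_{\ell\geq0}(-1)^{\ell}\sum_{M_{N+\ell}}g_\ell(\mathcal V_{n_1},\ldots)$, valid pointwise on the full-measure event where the $\mathcal V_j$ are distinct (the alternating sum telescopes because choosing $N$ out of $N+\ell+m$ points in $[0,V]$ and then subtracting overcounts is exactly the standard binomial identity $\sum_\ell(-1)^\ell\binom{m}{\ell}=0$ for $m\geq1$), and observe that only finitely many $\ell$ contribute because $f$ is bounded and $\mathcal V_N\leq V$ forces at most a bounded number of terms — but to make this rigorous I would instead truncate: fix $L_0$, split $f(\ldots)=\sum_{\ell=0}^{L_0}(-1)^\ell\sum_{M_{N+\ell}}g_\ell+\mathrm{Error}_{L_0}$, apply Theorem~\ref{rogersthm} to each of the finitely many terms $\ell=0,\ldots,L_0$, and finally control $\mathbb E|\mathrm{Error}_{L_0}|$ uniformly in $n$. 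Third, I would verify that the identical identity and the identical truncation hold for the limiting variables $(T_j,\Phi_{ij})$, so the limit of the truncated sum is the truncated sum of limits. Fourth, I would let $L_0\to\infty$.

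The main obstacle is the uniform-in-$n$ control of the tail $\mathrm{Error}_{L_0}$. Here I would use Proposition~\ref{concentration} (or rather the estimate $\mathbb E\big(M_{V,0,\frac\pi2}(\cdot)\big)\to V^2/8$ together with the bound \eqref{rogersthm3} on $\mathrm{Prob}\{\mathcal V_N\leq V\}$): on the event $\{\mathcal V_N\leq V\}$ the truncation error is dominated by $\|f\|_\infty$ times the number of ways the inclusion-exclusion fails to cancel, which is bounded by a binomial coefficient in $\#\{j:\mathcal V_j\leq V+1\}$; since Theorem~\ref{rogersthm} with $k=1$ (or \cite[Sec.~3]{jag}) gives uniform bounds on all factorial moments $\mathbb E\big[\#\{j:\mathcal V_j\leq V+1\}(\#\{j:\mathcal V_j\leq V+1\}-1)\cdots\big]$, converging to those of a Poisson($\tfrac{V+1}{2}$) variable, one gets $\mathbb E|\mathrm{Error}_{L_0}|\leq\varepsilon(L_0)$ with $\varepsilon(L_0)\to0$ uniformly in $n$ for $n$ large. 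A secondary nuisance is that $f$ need not be supported in a cube $[0,V]^{N+\binom N2}$ for a single $V$ a priori; but compact support gives such a $V$, and the $\widetilde\varphi$-arguments are automatically handled since $f$ has compact support in those slots too, so the same $V$ works. Once the tail is controlled, the theorem follows, and by \cite[Thm.~5.3]{BW} this is equivalent to Theorem~\ref{poissongauss}.
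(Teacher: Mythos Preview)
Your overall strategy---reduce to Theorem~\ref{rogersthm} via inclusion-exclusion, working on the full-measure set where the $\mathcal V_j$ are distinct---is exactly the paper's. However, your concrete implementation has a genuine gap and differs from the paper in one structural respect.

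The gap is that your inclusion-exclusion identity is false as written. With $g_\ell=\tfrac{1}{\ell!}f(x_{(1)},\ldots,x_{(N)},\ldots)\prod_j\psi(x_j)$ and a sum over $M_{N+\ell}$, fix an outcome with $m$ points in $\operatorname{supp}\psi$. For a given increasing $N$-tuple $(i_1,\ldots,i_N)$, the contribution of $f(\mathcal V_{i_1},\ldots,\mathcal V_{i_N},\ldots)$ to $\sum_\ell(-1)^\ell\sum_{M_{N+\ell}}g_\ell$ picks up a factor $(N+\ell)!/\ell!$ from the orderings, and the alternating sum does \emph{not} collapse to the indicator that $(i_1,\ldots,i_N)=(1,\ldots,N)$; already for $N=2$, $m=3$ one gets $2-6=-4$. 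Even with normalization $1/(N+\ell)!$ the surviving tuples are those with $i_N$ equal to the \emph{largest} index in $[0,V]$, not the smallest $N$. A related issue: your $g_\ell$ has only $\binom{N}{2}$ angle slots, but after sorting you need the angles among whichever $N$ of the $N+\ell$ indices are smallest, so $g_\ell$ cannot be a function of $(x_1,\ldots,x_{N+\ell},t_{12},\ldots,t_{(N-1)N})$ alone; you would have to take $k=N+\ell$ in Theorem~\ref{rogersthm}.

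The paper fixes both problems at once by \emph{not sorting}. Its $R_\ell^n$ sums over $M_{N+\ell}$ with the indicator $I\bigl(\mathcal V_{n_1}<\cdots<\mathcal V_{n_N}\ \text{and}\ \mathcal V_{n_{N+1}}<\cdots<\mathcal V_{n_{N+\ell}}<\mathcal V_{n_N}\bigr)$, so the first $N$ coordinates are already increasing (hence the angle arguments are always $\widetilde\varphi_{n_1n_2},\ldots,\widetilde\varphi_{n_{N-1}n_N}$, matching Theorem~\ref{rogersthm} with $k=N$), and for fixed $(n_1,\ldots,n_N)$ the number of extensions is $\binom{N_n(\mathcal V_{n_N})-N}{\ell}$; then $\sum_{j\leq\ell}(-1)^j\binom{m}{j}$ isolates $m=0$, i.e.\ $(n_1,\ldots,n_N)=(1,\ldots,N)$. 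No cutoff $\psi$ is needed. The other structural difference is that the paper avoids your uniform-in-$n$ tail bound altogether: taking $f\geq0$ WLOG, the partial sums $S_\ell^n$ satisfy the Bonferroni inequalities $S_\ell^n\geq f(\mathcal V_1,\ldots)$ for even $\ell$ and $\leq$ for odd $\ell$, so after applying Theorem~\ref{rogersthm} termwise one only needs $\mathbb E(S_\ell^\infty)\to\mathbb E\,f(T_1,\ldots,\Phi_{\bullet})$ on the \emph{limit} side, which follows by dominated convergence using that $N_\infty(K_2)$ is Poisson. Your factorial-moment tail control can be made to work once the combinatorics are repaired, but it is the harder road.
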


\begin{proof}[Proof of Theorem \ref{poissongauss2}]
Let us fix $f\in C\big((\R_{\geq0})^{N+\binom{N}{2}}\big)$ with compact support. Note that without loss of generality we can assume that $f$ is non-negative. We further note that the identity
\begin{align}\label{first}
&f\big(\mathcal{V}_{1},\ldots,\mathcal{V}_{N},\widetilde\varphi_{12},\ldots,\widetilde\varphi_{(N-1)N}\big)\nonumber\\
&=\sum_{(n_1,\ldots,n_N)\in M_N} f\big(\mathcal{V}_{n_1},\ldots,\mathcal{V}_{n_N},\widetilde\varphi_{n_1n_2},\ldots,\widetilde\varphi_{n_{N-1}n_N}\big)\\
&\times I\big(\mathcal{V}_{n_1}<\ldots<\mathcal{V}_{n_N} \,\text{ and }\, \mathcal{V}_{j}\geq \mathcal{V}_{n_N} \text{ for } j\notin\{n_1,\ldots,n_{N-1}\}\big)\nonumber
\end{align}
holds for all $L\in Z_n$, where 
\begin{align*}
Z_n:=\big\{L\in X_n \mid  \mathcal V_1<\mathcal V_2<\mathcal V_3<\ldots\big\}.
\end{align*}
We recall from Lemma \ref{measurelemma} that $\mu_n(Z_n)=1$ for $n\geq1$.  

Since the sum in \eqref{first} is not of the form in Theorem \ref{rogersthm} we approximate it by combinations of sums that we can handle. In particular we introduce, for each $\ell\geq0$, the random variables 
\begin{align*}
R_{\ell}^n(L):=&\sum_{(n_1,\ldots,n_{N+\ell})\in M_{N+\ell}} f\big(\mathcal{V}_{n_1},\ldots,\mathcal{V}_{n_N},\widetilde\varphi_{n_1n_2},\ldots,\widetilde\varphi_{n_{N-1}n_N}\big)\\
&\times I\big(\mathcal{V}_{n_1}<\ldots<\mathcal{V}_{n_N} \,\text{ and }\,\mathcal{V}_{n_{N+1}}<\mathcal{V}_{n_{N+2}}<\ldots<\mathcal{V}_{n_{N+\ell}}<\mathcal{V}_{n_N} \big)\nonumber,
\end{align*}
and 
\begin{align*}
S_{\ell}^n(L):=\sum_{j=0}^{\ell}(-1)^{j}R_{j}^n(L).
\end{align*}
(The sum in the definition of $R_{\ell}^n(L)$ is finite with probability $1$, since $f$ has compact support. Thus $R_{\ell}^n(L)$ is well-defined.) We also introduce the corresponding random variables expressed in terms of the expected limit variables; that is, for $\ell\geq0$, we let
\begin{align*}
R_{\ell}^{\infty}:=&\sum_{(n_1,\ldots,n_{N+\ell})\in M_{N+\ell}} f\big(T_{n_1},\ldots,T_{n_N},\Phi_{n_1n_2},\ldots,\Phi_{n_{N-1}n_N}\big)\\
&\times I\big(T_{n_1}<\ldots<T_{n_N} \,\text{ and }\,T_{n_{N+1}}<T_{n_{N+2}}<\ldots<T_{n_{N+\ell}}<T_{n_N} \big)\nonumber,
\end{align*}
and
\begin{align*}
S_{\ell}^{\infty}:=\sum_{j=0}^{\ell}(-1)^{j}R_{j}^{\infty}.
\end{align*}
We note that it follows from Theorem \ref{rogersthm}, with $k=N$, that
\begin{align*}
\lim_{n\to\infty}\mathbb E\big(R_{\ell}^n(\cdot)\big)=\mathbb E\big(R_{\ell}^{\infty}\big)
\end{align*}
and hence also that
\begin{align}\label{Tlimit}
\lim_{n\to\infty}\mathbb E\big(S_{\ell}^n(\cdot)\big)=\mathbb E\big(S_{\ell}^{\infty}\big)
\end{align}
for all $\ell\geq0$.

Now let
\begin{align*}
N_n(L,x):=\#\{j:\mathcal V_j\leq x\}
\end{align*}
and 
\begin{align*}
N_{\infty}(x):=\#\{j:T_j\leq x\}.
\end{align*}
Using  $N_n(L,x)$ we can rewrite $R_{\ell}^n(L)$ for all $L\in Z_n$ as
\begin{align*}
R_{\ell}^n(L)&=\sum_{(n_1,\ldots,n_N)\in M_N}\binom{N_n(L,\mathcal V_{n_N})-N}{\ell}  I\big(\mathcal{V}_{n_1}<\ldots<\mathcal{V}_{n_N}\big)\\
&\times f\big(\mathcal{V}_{n_1},\ldots,\mathcal{V}_{n_N},\widetilde\varphi_{n_1n_2},\ldots,\widetilde\varphi_{n_{N-1}n_N}\big).
\end{align*}
It follows that for $L\in Z_n$ we also have the identity
\begin{align}\label{Tidentity}
S_{\ell}^{n}(L)&=\sum_{(n_1,\ldots,n_N)\in M_N}f\big(\mathcal{V}_{n_1},\ldots,\mathcal{V}_{n_N},\widetilde\varphi_{n_1n_2},\ldots,\widetilde\varphi_{n_{N-1}n_N}\big)\\
&\times I\big(\mathcal{V}_{n_1}<\ldots<\mathcal{V}_{n_N}\big)\sum_{j=0}^{\ell}(-1)^{j}\binom{N_n(L,\mathcal V_{n_N})-N}{j} .\nonumber
\end{align}
Using $N_{\infty}(x)$ we also get similar expressions for $R_{\ell}^{\infty}$ and $S_{\ell}^{\infty}$.

By elementary properties of Pascal's triangle we have, for $m,\ell\geq0$:
\begin{align*}
\sum_{j=0}^{\ell}(-1)^j\binom{m}{j}=\begin{cases}
1 & \text{if $m=0$}\\
(-1)^{\ell}\binom{m-1}{\ell} & \text{if $m>\ell$} \\
0 & \text{if $\ell\geq m>0$.}
\end{cases}
\end{align*}
From this and the relations \eqref{first} and \eqref{Tidentity} we find that for \textit{even} $\ell$,
\begin{align*}
S_{\ell}^{n}(L)&\geq\sum_{(n_1,\ldots,n_N)\in M_N}f\big(\mathcal{V}_{n_1},\ldots,\mathcal{V}_{n_N},\widetilde\varphi_{n_1n_2},\ldots,\widetilde\varphi_{n_{N-1}n_N}\big)\nonumber\\
&\times  I\big(\mathcal{V}_{n_1}<\ldots<\mathcal{V}_{n_N} \,\text{ and }\, N_n(L,\mathcal V_{n_N})=N\big) \\
&=f\big(\mathcal{V}_{1},\ldots,\mathcal{V}_{N},\widetilde\varphi_{12},\ldots,\widetilde\varphi_{(N-1)N}\big)
\end{align*}
for all $L\in Z_n$. Similarly, for \textit{odd} $\ell$ we have
\begin{align*}
S_{\ell}^{n}(L)&\leq\sum_{(n_1,\ldots,n_N)\in M_N}f\big(\mathcal{V}_{n_1},\ldots,\mathcal{V}_{n_N},\widetilde\varphi_{n_1n_2},\ldots,\widetilde\varphi_{n_{N-1}n_N}\big)\nonumber\\
&\times  I\big(\mathcal{V}_{n_1}<\ldots<\mathcal{V}_{n_N} \,\text{ and }\, N_n(L,\mathcal V_{n_N})=N\big) \\
&=f\big(\mathcal{V}_{1},\ldots,\mathcal{V}_{N},\widetilde\varphi_{12},\ldots,\widetilde\varphi_{(N-1)N}\big)
\end{align*}
for all $L\in Z_n$. Since $\mu_n(Z_n)=1$ for $n\geq1$ it follows that 
\begin{align}\label{limsup}
&\limsup_{n\to\infty}\mathbb E\Big(f\big(\mathcal{V}_{1},\ldots,\mathcal{V}_{N},\widetilde\varphi_{12},\ldots,\widetilde\varphi_{(N-1)N}\big)\Big)\leq\limsup_{n\to\infty}\mathbb E\big(S_{\ell}^{n}(\cdot)\big)=\mathbb E\big(S_{\ell}^{\infty}\big)
\end{align}
for all even $\ell$, and 
\begin{align}\label{liminf}
&\liminf_{n\to\infty}\mathbb E\Big(f\big(\mathcal{V}_{1},\ldots,\mathcal{V}_{N},\widetilde\varphi_{12},\ldots,\widetilde\varphi_{(N-1)N}\big)\Big)\geq\liminf_{n\to\infty}\mathbb E\big(S_{\ell}^{n}(\cdot)\big)=\mathbb E\big(S_{\ell}^{\infty}\big)
\end{align}
for all odd $\ell$.

To conclude the proof we determine the limit of $\mathbb E\big(S_{\ell}^{\infty}\big)$ as $\ell\to\infty$. We note that \eqref{Tidentity} holds almost surely when $n$ is replaced by $\infty$. As a consequence, using that $f$ has compact support, we find that 
\begin{align}\label{Tinf}
S_{\ell}^{\infty}&\to\sum_{(n_1,\ldots,n_N)\in M_N}f\big(T_{n_1},\ldots,T_{n_N},\Phi_{n_1n_2},\ldots,\Phi_{n_{N-1}n_N}\big)\nonumber\nonumber\\
&\times  I\big(T_{n_1}<\ldots<T_{n_N} \,\text{ and }\, N_{\infty}(T_{n_N})=N\big) \\
&=f\big(T_{1},\ldots,T_{N},\Phi_{12},\ldots,\Phi_{(N-1)N}\big)\nonumber
\end{align}
almost surely as $\ell\to\infty$. Since $f$ is a compactly supported continuous function there exist positive constants $K_1$ and $K_2$ such that $|f|\leq K_1$ and $\text{supp} f\subset\Big\{\vecx\in(\R_{\geq0})^{N+\binom{N}{2}}\,\big|\, |\vecx|\leq K_2\Big\}$. Hence, using \eqref{Tidentity} and the binomial theorem, we find that for each $\ell$, we have almost surely
\begin{align}\label{Test}
|S_{\ell}^{\infty}|&\leq K_1\sum_{(n_1,\ldots,n_N)\in M_N} I\big(T_{n_1}<\ldots<T_{n_N}\leq K_2\big)2^{N_{\infty}(T_{n_N})-N}\\
&\leq K_1\cdot N_{\infty}(K_2)^N\cdot2^{N_{\infty}(K_2)}\ll3^{N_{\infty}(K_2)}\nonumber
\end{align}
(where the implied constant depends only on $K_1$ and $N$). Using that $N_{\infty}(K_2)$ is Poisson distributed with mean $\frac12K_2$ we furthermore find that
\begin{align}\label{Nest}
\mathbb E\big(3^{N_{\infty}(K_2)}\big)=e^{-K_2/2}\sum_{k=0}^{\infty}\frac{1}{k!}\Big(\frac{3K_2}{2}\Big)^k=e^{K_2}.
\end{align}
Now, by \eqref{Tinf}, \eqref{Test}, \eqref{Nest} and the dominated convergence theorem, we obtain
\begin{align}\label{almost}
\lim_{\ell\to\infty}\mathbb E\big(S_{\ell}^{\infty}\big)=\mathbb E\Big(f\big(T_{1},\ldots,T_{N},\Phi_{12},\ldots,\Phi_{(N-1)N}\big)\Big) .
\end{align}
Finally, it follows from \eqref{limsup}, \eqref{liminf} and \eqref{almost} that
\begin{align*}
&\mathbb E\Big(f\big(\mathcal{V}_{1},\ldots,\mathcal{V}_{N},\widetilde\varphi_{12},\ldots,\widetilde\varphi_{(N-1)N}\big)\Big)\to\mathbb E\Big(f\big(T_{1},\ldots,T_{N},\Phi_{12},\ldots,\Phi_{(N-1)N}\big)\Big) 
\end{align*}
as $n\to\infty$, which is the desired result.
\end{proof}

\section{Application to successive minima}\label{sucmin}

For $L\in X_n$ the $i$:th successive minimum of $L$, $1\leq i\leq n$, is defined by
\begin{align*}
\lambda_i(L):=\min\big\{\lambda\in\R_{\geq0} \mid \text{$L$ contains $i$ linearly independent vectors of length $\leq\lambda$}\big\}.
\end{align*} 
Equipped with Proposition \ref{concentrate} we are able to describe the behavior of successive minima of random lattices in large dimensions. 

\begin{thm}
Let $N\in\Z_{\geq1}$. Then 
\begin{align*}
\text{Prob}_{\mu_n}\Big\{L\in X_n \,\big|\, \lambda_i(L)=|\vecv_i|,\, 1\leq i \leq N\Big\}\to1
\end{align*}
as $n\to\infty$.
\end{thm}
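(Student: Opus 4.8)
The plan is to reduce the claim about successive minima to a statement about the angles $\varphi_{ij}$, which Proposition \ref{concentrate} controls. The key observation is that if the first $N$ vectors $\vecv_1,\ldots,\vecv_N$ in the length-ordered list are pairwise "almost orthogonal" — say all angles $\varphi_{ij}$ with $i<j\leq N$ are close to $\frac{\pi}{2}$ — then they are in particular linearly independent, and moreover no vector outside $\{\pm\vecv_1,\ldots,\pm\vecv_N\}$ can replace one of them to give a shorter linearly independent $i$-tuple. Concretely, I claim that on the event $E_n(C):=\{L\in X_n \mid \varphi_{ij}>\frac{\pi}{2}-\frac{C}{\sqrt n}\text{ for all }i<j\leq N\}$, we have $\lambda_i(L)=|\vecv_i|$ for $1\leq i\leq N$, at least once $n$ is large enough (depending on $C$ and $N$).

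First I would establish linear independence: a Gram matrix whose off-diagonal entries $\cos\varphi_{ij}$ are all $O(C/\sqrt n)$ in absolute value is, for $n$ large, diagonally dominant after normalizing the rows, hence invertible; so $\vecv_1,\ldots,\vecv_N$ are linearly independent on $E_n(C)$. This gives $\lambda_i(L)\leq|\vecv_i|$ for each $i\leq N$ automatically (the $\vecv_1,\ldots,\vecv_i$ are $i$ linearly independent vectors of length $\leq|\vecv_i|=\ell_i$). For the reverse inequality, suppose $\lambda_i(L)<\ell_i$ for some $i\leq N$; then $L$ contains $i$ linearly independent vectors of length $<\ell_i$, and since the only nonzero lattice vectors of length $<\ell_i$ are $\pm\vecv_1,\ldots,\pm\vecv_{i-1}$ (up to the length-ordering; strict inequality rules out $\vecv_i$ itself and anything longer), those $i$ independent vectors would have to come from a set spanning a space of dimension $\leq i-1$, a contradiction. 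Here I would use Lemma \ref{measurelemma} so that, with probability $1$, the $\mathcal V_j$ and hence the $\ell_j$ are strictly increasing, which makes "$\#\{j:\ell_j<\ell_i\}=i-1$" hold cleanly and avoids ties. Hence $\lambda_i(L)=\ell_i=|\vecv_i|$ on $E_n(C)$ for all $i\leq N$.

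Given this, the conclusion is immediate from Proposition \ref{concentrate}: that proposition says precisely that $\mu_n(X_n\setminus E_n(C))\to 0$ as $C,n\to\infty$, i.e. for every $\varepsilon>0$ there are $C$ and $n_0$ with $\mu_n(X_n\setminus E_n(C))<\varepsilon$ for $n\geq n_0$. Combining with the (probability-$1$) event from Lemma \ref{measurelemma}, we get
\begin{align*}
\text{Prob}_{\mu_n}\Big\{L\in X_n \,\big|\, \lambda_i(L)=|\vecv_i|,\, 1\leq i\leq N\Big\}\geq \mu_n\big(E_n(C)\cap Z_n\big)\geq 1-\varepsilon
\end{align*}
for all $n\geq n_0$, where $Z_n=\{\mathcal V_1<\mathcal V_2<\cdots\}$; letting $\varepsilon\to0$ finishes the proof. (For $N=1$ the statement is trivial since $\lambda_1(L)=\ell_1=|\vecv_1|$ always, so one may assume $N\geq 2$, which is the range where Proposition \ref{concentrate} applies.)

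I expect the main obstacle to be the deterministic geometric step in the second paragraph: making rigorous that near-orthogonality of $\vecv_1,\ldots,\vecv_N$ forces $\lambda_i(L)=\ell_i$, in particular the argument that no shorter linearly independent replacement exists. The clean way is the counting argument via Lemma \ref{measurelemma} sketched above, which sidesteps any quantitative use of the angle bound for the reverse inequality — the angle bound is only needed for linear independence of the $\vecv_i$ themselves. One should double-check the edge case where some $\ell_j$ with $j>N$ could in principle equal $\ell_N$; again Lemma \ref{measurelemma} (strict monotonicity of the $\mathcal V_j$, equivalently of the $\ell_j$) removes this worry.
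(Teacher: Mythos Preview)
Your proof is correct and follows the same approach as the paper: use the Gram matrix to show that near-orthogonal vectors are linearly independent, then invoke Proposition \ref{concentrate}. The paper's version is slightly leaner --- it uses a fixed threshold $\varepsilon$ depending only on $N$ rather than the $C/\sqrt n$ rate, and it does not appeal to Lemma \ref{measurelemma}, which is in fact unnecessary here since the inequality $\lambda_i(L)\geq|\vecv_i|$ holds for every lattice regardless of ties (any nonzero lattice vector of length $<|\vecv_i|$ automatically lies in $\{\pm\vecv_1,\ldots,\pm\vecv_{i-1}\}$, a set of rank at most $i-1$).
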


\begin{proof}
We claim that given $N\in\Z_{\geq1}$ there exists an $\ve>0$ such that for all vectors $\vecw_1,\ldots,\vecw_N\in\R^n\setminus\{\vec0\}$ ($n\geq N$) satifying $\varphi(\vecw_i,\vecw_j)>\frac{\pi}{2}-\ve$ for all $i\neq j$, we have $\dim(\text{Span}\{\vecw_1,\ldots,\vecw_N\})=N$. Indeed, given $\vecw_1,\ldots,\vecw_N\in\R^n\setminus\{\vec0\}$, we can without loss of generality assume that $|\vecw_1|=\ldots=|\vecw_N|=1$. Recall that the Gram matrix $G$ of the vectors $\vecw_1,\ldots,\vecw_N$ is given by $G=(g_{ij})_{1\leq i,j\leq N}=(\vecw_i\cdot\vecw_j)_{1\leq i,j\leq N}$. We have $g_{11}=\ldots=g_{NN}=1$ and $|g_{ij}|<\sin\ve<\ve$ for all $i\neq j$. Hence it is clear that, for $\ve$ small enough, the matrix $G$ has a non-zero determinant. It follows that $\vecw_1,\ldots,\vecw_N$ are linearly independent, which proves our claim. Finally, this observation together with Proposition \ref{concentrate} gives the desired result.
\end{proof}

\begin{cor}\label{succmin}
For any fixed $N\in\Z_{\geq1}$, the $N$-dimensional random vector 
\begin{align*}
\big(V_n\lambda_1(\cdot)^n,\ldots,V_n\lambda_N(\cdot)^n\big)
\end{align*}
converges in distribution to the distribution of the first $N$ points of a Poisson process on the positive real line with intensity $\frac{1}{2}$ as $n\to\infty$.  
\end{cor}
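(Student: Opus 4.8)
The plan is to deduce the corollary directly from the theorem just proved together with the known limiting behaviour of the volumes $\{\mathcal V_j\}$, via an elementary coupling argument. By the preceding theorem, the event
\[
A_n := \big\{ L \in X_n \,\big|\, \lambda_i(L) = |\vecv_i| \text{ for } 1 \le i \le N \big\}
\]
satisfies $\mu_n(A_n) \to 1$ as $n \to \infty$. On $A_n$ we have $V_n\lambda_i(L)^n = V_n|\vecv_i|^n = \mathcal V_i$ for each $1 \le i \le N$ (recall $\mathcal V_j = V_n\ell_j^n$ with $\ell_j = |\vecv_j|$), so the random vector $\big(V_n\lambda_1(\cdot)^n,\ldots,V_n\lambda_N(\cdot)^n\big)$ agrees with $(\mathcal V_1,\ldots,\mathcal V_N)$ on a set of $\mu_n$-measure tending to $1$.

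Next I would recall that, by \cite[Thm.\ 1]{jag} (equivalently, by taking the marginal in the first $N$ variables of Theorem \ref{poissongauss}), the vector $(\mathcal V_1,\ldots,\mathcal V_N)$ converges in distribution, as $n\to\infty$, to $(T_1,\ldots,T_N)$, the first $N$ points of a Poisson process on $\R_{\geq 0}$ with intensity $\tfrac12$. Finally I would invoke the standard fact that if $\{X_n\}$ and $\{Y_n\}$ are sequences of random vectors with $\text{Prob}(X_n = Y_n)\to 1$ and $Y_n$ converges in distribution to $Y$, then $X_n$ converges in distribution to $Y$ as well; indeed, for any bounded continuous $g$ one has $|\mathbb E\,g(X_n) - \mathbb E\,g(Y_n)| \le 2\|g\|_\infty\,\text{Prob}(X_n \neq Y_n) \to 0$. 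Applying this with $X_n = \big(V_n\lambda_1(\cdot)^n,\ldots,V_n\lambda_N(\cdot)^n\big)$ and $Y_n = (\mathcal V_1,\ldots,\mathcal V_N)$ yields the claim.

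There is essentially no genuine obstacle here: all the substance has already been extracted, namely in the preceding theorem (which rests on Proposition \ref{concentrate}) and in \cite[Thm.\ 1]{jag}. The only point requiring a little care is to state the comparison of the two random vectors correctly — that coincidence on events of probability tending to $1$ is enough to transfer convergence in distribution — rather than attempting any pathwise or coupling construction beyond this observation.
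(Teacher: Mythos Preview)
Your proposal is correct and matches the paper's intended argument; in fact the paper gives no proof at all for this corollary, treating it as an immediate consequence of the preceding theorem together with \cite[Thm.\ 1]{jag}, and you have simply spelled out the standard coupling step that justifies this.
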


\subsubsection*{Acknowledgement} I would like to thank Jens Marklof whose questions concerning the paper \cite{jag} were the starting point for the investigations presented here. I am indebted to Svante Janson for suggesting the approach to Theorem \ref{poissongauss}, and also for making me aware of Stam's paper \cite{stam}. I am grateful to my advisor Andreas Str\"ombergsson for helpful and inspiring discussions on this work. Finally I would like to thank Igor Wigman for asking about successive minima.


\begin{thebibliography}{99}

\bibitem{BW} R. Bhattacharya, E. C. Waymire, \textit{A basic course in probability theory}, Springer, New York, 2007.

\bibitem{borel} E. Borel, \textit{Introduction g\'{e}om\'{e}trique \`{a} quelques th\'{e}ories physiques}, Gauthier-Villars, Paris, 1914.

\bibitem{DF} P. Diaconis, D. Freedman, A dozen de Finetti-style results in search of a theory, Ann. Inst. H. Poincar\'{e} Probab. Statist. \textbf{23} (1987), no. 2, suppl., 397--423.

\bibitem{ElkMM} N. D. Elkies, C. T. McMullen,
Gaps in ${\sqrt n}\bmod 1$ and ergodic theory,
Duke Math. J. \textbf{123} (2004), no. 1, 95--139. 

\bibitem{king} J. F. C.
Kingman, \textit{Poisson processes},
Oxford Studies in Probability, vol. 3, The Clarendon Press, Oxford University Press, New York, 1993.

\bibitem{LLL} A. K. Lenstra, H. W. Lenstra Jr., L. Lov\'{a}sz, Factoring polynomials with rational coefficients, Math. 
Ann. \textbf{261} (1982), no. 4, 515--534.

\bibitem{jens1}  J. Marklof, The $n$-point correlations between values of a linear form,
Ergod. Th. \& Dynam. Sys. \textbf{20} (2000), no. 4, 1127--1172. 

\bibitem{JMAS1} J. Marklof, A. Str\"ombergsson, The distribution of free path lengths in the periodic Lorentz gas and related lattice point problems, arXiv:0706.4395; to appear in Ann.\ of Math. 

\bibitem{JMAS2} J. Marklof, A. Str\"ombergsson, The Boltzmann-Grad limit of the periodic Lorentz gas, arXiv:0801.0612; to appear in Ann.\ of Math.

\bibitem{jens2} J. Marklof, The asymptotic distribution of Frobenius numbers, Invent. Math. \textbf{181} (2010), no. 1, 179--207.

\bibitem{NDB} R. Neelamani, S. Dash, R. G. Baraniuk, On nearly orthogonal lattice bases and random lattices, SIAM J. Discrete Math. \textbf{21} (2007), no. 1, 199--219.

\bibitem{LLL2} P. Q. Nguyen, B. Vallee (eds.), \textit{The LLL algorithm: Survey and applications}, Springer-Verlag, Berlin, 2010. 

\bibitem{rogers1} C. A. Rogers,
Mean values over the space of lattices,
Acta Math. \textbf{94} (1955), 249--287.

\bibitem{rogers2} C. A. Rogers,
The moments of the number of points of a lattice in a bounded set,
Phil. Trans. R. Soc. Lond. A. \textbf{248} (1955), 225--251.

\bibitem{rogers3} C. A. Rogers,
The number of lattice points in a set,
Proc. London Math. Soc. (3) \textbf{6} (1956), 305--320. 

\bibitem{rudin} W. Rudin, \textit{Real and complex analysis},
third edition, McGraw-Hill, New York, 1987.

\bibitem{stam} A. J. Stam, Limit theorems for uniform distributions on spheres in high-dimensional Euclidean spaces, J. Appl. Probab. \textbf{19}  (1982), no. 1, 221-228.

\bibitem{ASAV} A. Str\"ombergsson, A. Venkatesh, 
Small solutions to linear congruences and Hecke equidistribution, 
Acta Arith. \textbf{118} (2005), no. 1, 41--78. 

\bibitem{jag} A. S\"odergren, On the Poisson distribution of lengths of lattice vectors in a random lattice, arXiv:1001.3623; to appear in Math. Z.

\bibitem{terras} A. Terras, \textit{Harmonic analysis on symmetric spaces and applications, II}, Springer-Verlag, Berlin, 1988.

\end{thebibliography}
\end{document}